\title{Fuchs' problem for 2-groups}
\author{Eric Swartz\footnote{Department of Mathematics, William \& Mary, Williamsburg, VA, USA.\newline E-mail: easwartz@wm.edu}\\
Nicholas J.\ Werner\footnote{Department of Mathematics, Computer and Information Science, State University of New York College at Old Westbury, Old Westbury, NY, USA.\newline E-mail: wernern@oldwestbury.edu}
}
\newcommand{\Z}{\mathbb{Z}}
\newcommand{\F}{\mathbb{F}}
\newcommand{\rchar}{\text{char}}
\newcommand{\Ker}{\text{Ker}\,}
\newcommand{\bfi}{\mathbf{i}}
\newcommand{\bfj}{\mathbf{j}}
\newcommand{\ep}{\varepsilon}
\newcommand{\asum}[1]{$1+x_{#1}+x_{#1}^2+x_{#1}^3$}
\newcommand{\bsum}[1]{$1+x_{#1}+x_{#1}^2+x_{#1}^3\texttt{f6}$}
\newcommand{\f}{\texttt{f6}}
\newcommand{\les}{\leqslant}
\newcommand{\ges}{\geqslant}
\newcommand{\PL}{d}
\numberwithin{equation}{section}
\theoremstyle{definition}
\theoremstyle{plain}\newtheorem{Lem}[equation]{Lemma}
\theoremstyle{plain}\newtheorem{Prop}[equation]{Proposition}
\theoremstyle{plain}\newtheorem{Thm}[equation]{Theorem}
\theoremstyle{plain}\newtheorem{Cor}[equation]{Corollary}
\theoremstyle{plain}
\theoremstyle{remark}
\theoremstyle{remark}
\theoremstyle{definition}\newtheorem{Ex}[equation]{Example}
\theoremstyle{definition}
\theoremstyle{definition}\newtheorem{Ques}[equation]{Question}
\theoremstyle{definition}\newtheorem{mydef}[equation]{Definition}
\newenvironment{customthm}[1]
  {\innercustomthm}
  {\endinnercustomthm}
\begin{document}

\maketitle
\thispagestyle{empty}

\abstract{Nearly $60$ years ago, L\'{a}szl\'{o} Fuchs posed the problem of determining which groups can be realized as the group of units of a commutative ring.  To date, the question remains open, although significant progress has been made.  Along this line, one could also ask the more general question as to which finite groups can be realized as the group of units of a finite ring.  In this paper, we consider the question of which $2$-groups are realizable as unit groups of finite rings, a necessary step toward determining which nilpotent groups are realizable.  We prove that all $2$-groups of exponent $4$ and exponent $2$ are realizable in characteristic $2$, and we prove that many $2$-groups with exponent $4$ and nilpotency class $3$ are realizable in characteristic $2$.  On the other hand, we provide an example of a $2$-group with exponent $4$ and nilpotency class $4$ that is not realizable in characteristic $2$.  Moreover, while some groups of exponent greater than $4$ are realizable as unit groups of rings, we prove that any $2$-group with a self-centralizing element of order $8$ or greater is never realizable in characteristic $2^m$, and consequently any indecomposable, nonabelian group with a self-centralizing element of order $8$ or greater cannot be the group of units of a finite ring.}

\section{Introduction}\label{Intro section}

The purpose of this paper is to describe which finite 2-groups occur as the unit group of a finite ring. Throughout, all rings are associative and unital. For a ring $R$, $R^\times$ denotes the group of units of $R$. Given a group $G$, we say that $G$ is \textit{realizable} if there exists a ring $R$ such that $R^\times = G$. Determining whether a group or family of groups is realizable has come to be called \textit{Fuchs' problem} after L\'{a}szl\'{o} Fuchs, who posed the question of characterizing the groups that can occur as the group of units of a commutative ring \cite[Problem 72, p.\ 299]{Fuchs}. 

To date, no complete answer has been given to Fuchs' problem, although many partial answers or modifications have been produced. In \cite{Gilmer}, Gilmer determined all finite commutative rings $R$ such that $R^\times$ is cyclic; more recently, Dol\v{z}an \cite{DolzanNilpotent} characterized finite rings whose group of units is nilpotent (thus correcting an erroneous solution to this problem given in \cite[Cor.\ XXI.10]{McDonald}). All finite realizable groups of odd order were described by Ditor in \cite{Ditor}. In the past decade, Davis and Occhipinti determined all realizable finite simple groups \cite{DavisOcchSimple}, as well as all realizable alternating and symmetric groups \cite{DavisOcchAltSymm}. During the same time period, Chebolu and Lockridge solved Fuchs' problem for dihedral groups \cite{ChebLockDihedral} and recently began the study of the problem for $p$-groups \cite{ChebLockPGroup}.  Several other recent articles have investigated realizable groups in the traditional commutative setting \cite{ChebLockFields, ChebLockIndecomp, ChebLockHowMany, DelCorso, DelCorsoDvornFiniteGroups, DelCorsoDvornFuchs}.

A natural first generalization of Fuchs' original problem, further motivated by Dol\v{z}an's work \cite{DolzanNilpotent}, is to ask which nilpotent groups can be realized as the group of units of a finite ring. Given that a finite nilpotent group $G$ is the direct product of its Sylow subgroups, by the work of Ditor \cite{Ditor}, sufficient conditions for $G$ to be realizable can be obtained by studying which 2-groups are realizable as the group of units of a finite ring.  This is the aim of the present paper.

To further put the work of this paper in context, the recent paper \cite{ChebLockPGroup} began the study of the realizability of $2$-groups as unit groups of rings (possibly having characteristic $0$).  It was shown that, for a fixed positive integer $n$, there are only finitely many abelian $2$-groups of rank $n$ that are realizable in characteristic $2$; see \cite[Theorem 1.2]{ChebLockPGroup}.  Moreover, the $2$-groups that have a cyclic subgroup of index $2$ or are generalized quaternion are classified; see \cite[Theorem 1.4]{ChebLockPGroup}.


While we are not able to provide a complete classification of realizable 2-groups, we will present partial results that apply to large classes of 2-groups. Not every 2-group is realizable, and the exponent of the group turns out to be a significant factor in determining realizability. Indeed, all of our most significant theorems, which are stated below, involve conditions on the exponent of the group.

Before stating our main result, we present the following somewhat technical definition. For $a, b \in G$, $[a,b] = a^{-1}b^{-1}ab$, and if $a, b, c \in G$, then their triple commutator is $[a, b, c] := [[a, b], c]$.

\begin{mydef}\label{def:goodseq}
We define a \textit{good generating sequence} $X = (x_1,\dots, x_n)$ to be a generating set for $G$ such that every element $g \in G$ can be written uniquely as
\[ g = \prod_{i=1}^n x_i^{\ep_i},\]
where each $\ep_i \in \{0,1\}$, and the sequence further satisfies:
\begin{itemize}
    \item[(1)] for $i < j$, $[x_j, x_i] \in X \cup \{1\}$ and, if $[x_j, x_i] \notin Z(G)$, then:
        \begin{itemize}
            \item[(i)] $[x_j, x_i] = x_m$ for some $m \in \{1,\dots, n\}$ such that $i < m < j$;
            \item[(ii)] if $x_i^2 \notin Z(G)$, $x_i^2 = \prod_{m \in M} x_m$ for some $M \subseteq \{1,\ldots,n\}$, and $x_\ell \notin Z(G)$ for some $\ell \in M$, then $\ell < j$;
            \item[(iii)] if $x_j^2 \notin Z(G)$, $x_j^2 = \prod_{m \in M} x_m$ for some $M \subseteq \{1,\ldots,n\}$, and $x_\ell \notin Z(G)$ for some $\ell \in M$, then $i < \ell$;
        \end{itemize}
    \item[(2)] for $i < j < k$, if $[x_k, x_i, x_j] = 1$.  
\end{itemize} 
\end{mydef}

\begin{Thm}\label{thm:exp4nil2} 
Let $G$ be a $2$-group of exponent $4$.  If $G$ has nilpotency class at most $2$ or if $G$ has nilpotency class $3$ and a good generating sequence, then $G$ is realizable as the group of units of a ring with characteristic $2$.
\end{Thm}

Higman \cite{Higman} and Sims \cite{Sims} determined lower and upper bounds, respectively, on the number of isomorphism classes of finite 2-groups.  It follows from their work that

\[ \frac{\log(\# \text{ of groups of order }2^n \text{ with exponent }4 \text{ and nilpotency class } 2)}{\log(\# \text{ of groups of order } 2^n)} \to 1 \]
as $n \to \infty$.  Thus, Theorem \ref{thm:exp4nil2} implies that many, perhaps even most, finite 2-groups are realizable in characteristic 2.  For example, a calculation in GAP \cite{GAP} shows that exactly 8789818 out of 10494213 groups of order $512$ satisfy the hypotheses of Theorem \ref{thm:exp4nil2}, which comes out to about 83.76\% of all groups of order $512$.  

On the other hand, not all groups with exponent $4$ are realizable in characteristic $2$: Section \ref{sect:counterexample} is devoted to proving that SmallGroup(64,34) in GAP \cite{GAP}, which has presentation
\begin{equation*}
\langle x_1, x_2 : x_1^4=x_2^2 = (x_1x_2)^4= (x_1^2x_2)^4 = 1 \rangle,
\end{equation*}
is not realizable in characteristic 2.

For 2-groups of exponent at least 8, the situation is perhaps even more nebulous. In Section \ref{Prelim section}, we will prove that if $G$ is a nonabelian, indecomposable 2-group and $G=R^\times$ for a finite ring $R$, then the characteristic of $R$ must be $2^m$ for some $m \ges 1$ (Proposition \ref{Ring char prop}). This allows us to narrow our focus to rings of characteristic $2^m$. In some cases, we are able to prove that 2-groups of large exponent are not realizable in characteristic 2, or not realizable at all.

\begin{Thm}\label{Exponent upper bound}
Let $n \ges 1$, let $G$ be a 2-group of order $2^n$ that is realizable in characteristic $2^m$, and let $L= \lceil \log_2(n+1) \rceil$. Then, the exponent of $G$ is at most $2^{L+m-1}$.
\end{Thm}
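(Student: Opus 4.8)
The plan is to exploit both features of the hypothesis simultaneously: the characteristic $2^m$ and the fact that $G = R^\times$ is a $2$-group. Let $J$ denote the Jacobson radical of $R$; since $R$ is finite, $J$ is nilpotent and $R/J$ is semisimple, so by Wedderburn--Artin $R/J \cong \prod_i M_{k_i}(\F_{q_i})$. Because $\rchar R = 2^m$, the identity satisfies $2^m\cdot 1 = 0$, so every residue field $\F_{q_i}$ has characteristic $2$, i.e.\ $q_i = 2^{f_i}$. The reduction map $R^\times \to (R/J)^\times$ is surjective, so $(R/J)^\times \cong \prod_i GL_{k_i}(\F_{2^{f_i}})$ is a $2$-group; since $|GL_{k}(\F_{2^f})|$ is divisible by an odd number greater than $1$ unless $k = f = 1$, every factor is trivial and $R/J \cong \F_2^{\,t}$. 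Consequently $R^\times = 1 + J$, and the bijection $1 + x \mapsto x$ gives $|J| = |G| = 2^n$.

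First I would bound the nilpotency index $k$ of $J$ (so $J^{k} = 0 \neq J^{k-1}$). The powers $J \supsetneq J^2 \supsetneq \cdots \supsetneq J^{k-1} \supsetneq J^{k} = 0$ form a strictly descending chain of additive $2$-subgroups (strictness holds because $J^i = J^{i+1}$ would force $J^i = 0$), so each successive index is at least $2$ and $2^n = |J| \ges 2^{k-1}$. Hence $k \les n+1$, equivalently $k - 1 \les n$.

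Next I would bound the order of an arbitrary unit $1 + x$ with $x \in J$. Since $x^{k} = 0$, the binomial theorem gives $(1+x)^{2^a} = 1 + \sum_{j=1}^{k-1}\binom{2^a}{j}x^j$ for any $a$. The key point is that $2^m\cdot r = 0$ for every $r \in R$, so a term vanishes as soon as $2^m \mid \binom{2^a}{j}$. By Kummer's theorem $v_2\!\binom{2^a}{j} = a - v_2(j)$ for $1 \les j \les 2^a$, and $v_2(j) \les \lfloor \log_2(k-1)\rfloor$ whenever $j \les k-1$. Thus choosing $a = m + \lfloor \log_2(k-1)\rfloor$ (which also guarantees $2^a \ges k-1 \ges j$, so Kummer applies) makes every binomial coefficient divisible by $2^m$, forcing $(1+x)^{2^a} = 1$. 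Therefore the exponent of $G$ divides $2^{\,m + \lfloor \log_2(k-1)\rfloor}$.

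Finally I would assemble the estimate. Using $k - 1 \les n$ gives $\lfloor \log_2(k-1)\rfloor \les \lfloor \log_2 n\rfloor$, and the elementary identity $\lfloor \log_2 n\rfloor = \lceil \log_2(n+1)\rceil - 1 = L - 1$ then yields $\exp(G) \les 2^{\,m + L - 1} = 2^{\,L+m-1}$, as claimed. I expect the main obstacle to be the bookkeeping in the reduction and the final estimate: pinning down that the residue fields collapse all the way to $\F_2$ (so that one really is working inside $1 + J$), and then squeezing the Kummer-based exponent bound so that the floor and ceiling match the stated constant $2^{L+m-1}$ exactly rather than losing a factor of $2$.
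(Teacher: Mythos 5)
Your proof is correct, and its skeleton---realize $G$ as $1+J$ for a nilpotent ideal $J$ with $|J|=|G|=2^n$, bound the nilpotency index by $n+1$ via the strictly descending chain of powers of $J$, then annihilate a unit's $2^{L+m-1}$-th power using $2$-divisibility of binomial coefficients---matches the paper's. The differences lie in how each half is executed. For the structural half, the paper invokes its Theorem \ref{R=2G thm}, which passes through the group ring $\Z_{2^m}[G]$ and its residue rings to produce a local ring with $G=1+M$; you instead derive $R/J\cong\F_2^{\,t}$ and $R^\times=1+J$ directly from Wedderburn--Artin together with the observation that $GL_k(\F_{2^f})$ has nontrivial odd part unless $k=f=1$. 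This is self-contained and applies to an arbitrary realizing ring of characteristic $2^m$, with no group-ring reduction needed. For the exponent bound, the paper argues in two stages: for $g\in G$ the element $1+g$ lies in $M$ and is nilpotent, so $(1+g)^{2^L}=0$, which after expansion gives $g^{2^L}=-(1+2\alpha)$, and a separate lemma (that $(1+2t)^{2^{m-1}}=1$ over $\Z_{2^m}$, proved via Legendre's formula) finishes the job; you do it in one shot by applying the identity $v_2\binom{2^a}{j}=a-v_2(j)$ to $(1+x)^{2^a}$ with $a=m+\lfloor\log_2(k-1)\rfloor$, where $x\in J$ and $k$ is the nilpotency index. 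Both routes yield the same constant via the identity $\lfloor\log_2 n\rfloor=\lceil\log_2(n+1)\rceil-1$; yours is slightly more direct and in fact records the marginally sharper bound $2^{\,m+\lfloor\log_2(k-1)\rfloor}$ in terms of the actual nilpotency index.
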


\begin{Thm}\label{Exp at least 8 cor 1}
Let $G$ be a finite $2$-group. Assume that there exists $a \in G$ such that $|a| \ges 8$ and $C_G(a) = \langle a \rangle$. Then, $G$ is not realizable in characteristic $2^m$ for any $m \ges 1$.
\end{Thm}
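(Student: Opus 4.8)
The plan is to reduce the problem to a statement about finite commutative rings with cyclic unit group, and then either invoke Gilmer's classification \cite{Gilmer} or argue directly via the structure of finite chain rings. Suppose, for contradiction, that $R$ is a finite ring with $\rchar(R) = 2^m$ and $R^\times = G$, and let $a \in G$ with $|a| = 2^k \ges 8$ (so $k \ges 3$) and $C_G(a) = \langle a \rangle$.

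First I would pass to a commutative subring. Let $S = (\Z/2^m\Z)[a] \subseteq R$ be the subring generated by $a$ over the prime subring $\Z/2^m\Z$; this is a finite commutative ring with $\rchar(S) = 2^m$. I claim $S^\times = \langle a \rangle$. The inclusion $\langle a \rangle \subseteq S^\times$ is clear. Conversely, any $u \in S^\times$ is a unit of $R$ (its inverse lies in $S \subseteq R$), so $u \in G$; and since $S$ is commutative and $a \in S$, $u$ commutes with $a$, whence $u \in C_G(a) = \langle a \rangle$. Thus $S$ is a finite commutative ring of characteristic $2^m$ whose unit group is cyclic of order $2^k \ges 8$. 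The theorem therefore follows once I show that no such ring exists.

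To derive the contradiction, decompose the finite (hence Artinian) commutative ring $S$ as a product $S \cong \prod_i S_i$ of local rings, so that $S^\times \cong \prod_i S_i^\times$. Since $S^\times$ is a $2$-group, each $|S_i^\times|$ is a power of $2$; writing the residue field of $S_i$ as $\F_{2^{f_i}}$ (its characteristic divides $2^m$), the factor $2^{f_i} - 1$ divides $|S_i^\times|$, forcing $f_i = 1$, so that every residue field is $\F_2$ and $S_i^\times = 1 + \mathfrak{m}_i$. A direct product of nontrivial $2$-groups is cyclic only if exactly one factor is nontrivial, so $S^\times \cong T^\times$ for a single local ring $T$ with residue field $\F_2$, maximal ideal $\mathfrak{m}$, and $T^\times = 1 + \mathfrak{m}$ cyclic of order $2^k \ges 8$.

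The final and most delicate step is to rule this out. Because $1 + \mathfrak{m}$ is cyclic, its quotient $(1 + \mathfrak{m})/(1 + \mathfrak{m}^2) \cong \mathfrak{m}/\mathfrak{m}^2$ is cyclic, so $\dim_{\F_2} \mathfrak{m}/\mathfrak{m}^2 \les 1$ and $\mathfrak{m}$ is principal; thus $T$ is a finite chain ring with residue field $\F_2$, say $\mathfrak{m} = (\pi)$ with $\pi^e = 0 \neq \pi^{e-1}$ and $|1 + \mathfrak{m}| = 2^{e-1}$. I would then bound the order of a one-unit $1 + \pi^i v$ by tracking how squaring moves it up the filtration: since $(1+x)^2 = 1 + 2x + x^2$ with $2 \in \mathfrak{m}^t$ for the appropriate ramification exponent $t$, squaring raises the $\pi$-adic level of $x$ from $i$ to $\min(2i, i+t)$. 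Iterating shows that the exponent of $1 + \mathfrak{m}$ grows only like doubling-then-linear-in-$t$, which is strictly smaller than $2^{e-1}$ once $e - 1 \ges 3$; hence $1 + \mathfrak{m}$ cannot be cyclic of order $\ges 8$. The main obstacle is making this valuation bookkeeping precise---verifying that the leading terms do not cancel and handling the interplay between $e$, $t$, and $m$ uniformly---so in practice I would instead quote Gilmer's classification \cite{Gilmer}, from which one sees that a finite commutative ring with cyclic unit group of $2$-power order $\ges 8$ must have a direct factor that is a field of odd characteristic (such as $\F_9$ or a Fermat-prime field), which is incompatible with $\rchar(S) = 2^m$.
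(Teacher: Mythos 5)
Your proof is correct, but it takes a genuinely different route from the paper's. Your key move is to pass to the commutative subring $S=\Z_{2^m}[a]\subseteq R$ and observe that $S^\times=C_G(a)=\langle a\rangle$ is cyclic of order at least $8$ (the verification is right: units of $S$ are units of $R$ commuting with $a$, and conversely $a^{-1}=a^{|a|-1}\in S$), after which Gilmer's classification of finite commutative rings with cyclic unit group finishes the job, since the unique local factor carrying the cyclic $2$-group of order $\ges 8$ would have to be a field of order $2^k+1$, of odd characteristic, which cannot occur in a ring of characteristic $2^m$. The paper argues quite differently: it first uses $\Z_{2^m}^\times\les C_G(a)=\langle a\rangle$ to force $m\les 2$, disposes of characteristic $2$ via the more general Theorem \ref{Higher exp thm} (built on relations of the form $x^2+x=1+x^k$ inside the local ring $R=1+M$ and the polynomial computation of Lemma \ref{Polynomial lemma}), and then runs a separate explicit computation in characteristic $4$. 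Your approach is shorter and conceptually cleaner, but it outsources the hard part to Gilmer and applies only when the centralizer is exactly $\langle a\rangle$; the paper's machinery is strictly more general, handling hypotheses where $C_G(a)/\langle a\rangle$ has small exponent (e.g., ruling out $C_{16}\times C_2$ in characteristic $2$), which your reduction cannot reach. Two small caveats: your middle ``chain ring'' sketch is not actually a proof as written---the exponent bound you gesture at is delicate, since $\Z_{2^e}$ itself is a chain ring whose one-units have exponent $2^{e-2}$, only a factor of $2$ below cyclic---but you correctly fall back on Gilmer, so nothing is lost; and you should note the standard reduction allowing you to assume $R^\times=G$ rather than $R^\times\cong G$ (transport $a$ through the isomorphism), as well as the fact that $S$ is automatically finite even if $R$ is not.
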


Theorem \ref{Exp at least 8 cor 1} can be applied to some well known families of groups of order $2^n$, including cyclic groups (for $n \ges 3$), generalized quaternion groups (for $n \ges 4$), and quasidihedral groups (for $n \ges 4$). Hence, we recover the result from \cite{ChebLockPGroup} that none of these groups is realizable in characteristic $2^m$, and the latter two families---being nonabelian and indecomposable---are not realized by any finite ring. By contrast, there exist realizable 2-groups of arbitrarily large exponent. For instance, when $m \ges 3$, the unit group of the integers mod $2^m$ is isomorphic to $C_{2^{m-2}} \times C_2$---which has exponent $2^{m-2}$---and thus this group is realizable in characteristic $2^m$.

The paper is organized as follows. In Section \ref{Prelim section}, we set up our basic notation and translate the question of realizing a 2-group $G$ to the study of residue rings of group rings over $G$. Section \ref{Exponent 4 section} is devoted to the proof of Theorem \ref{thm:exp4nil2}, Section \ref{sect:counterexample} contains an example of a group with exponent $4$ that is not realizable in characteristic $2$, and Section \ref{Large exponent section} focuses on Theorems \ref{Exponent upper bound} and \ref{Exp at least 8 cor 1}. Finally, in Section \ref{Example section}, we collect a number of intriguing examples and questions about the existence of 2-groups satisfying certain properties (e.g.,\ Question \ref{Realizable, but not in char 2}: Does there exist an indecomposable 2-group that is realizable in characteristic $2^m$ for some $m \ges 2$, but is not realizable in characteristic 2?).  While we are able to provide many answers, some examples raise more questions than they answer, which will hopefully inspire future investigation.

\section{Preliminaries}\label{Prelim section}

We begin by recalling some standard notation and terminology. For any positive integer $n$, $\Z_n$ denotes the ring of integers mod $n$. For a prime power $p^n$, $\F_{p^n}$ is the finite field with $p^n$ elements. A group is \textit{indecomposable} if it is not isomorphic to a direct product of two nontrivial groups. Likewise, a ring is \textit{indecomposable} if it is not isomorphic to a direct product of two nontrivial rings. 
The characteristic of a ring $R$ is denoted by $\rchar(R)$.

For a finite group $G$ and ring $R$, $R[G]$ will denote the group ring of $G$ over $R$. The elements of $R[G]$ are sums of the form $\sum_{g \in G} \lambda_g g$, where each $\lambda_g \in R$. These sums are added componentwise, and are multiplied by using the rule $(\lambda_g g)\cdot(\lambda_h h) = \lambda_g \lambda_h gh$ and extending linearly. Usually, our group rings will be over $\Z_m$ for some $m \ges 2$. In this situation, $G$ is a subgroup of $(\Z_m[G])^\times$, and we will show shortly that much of the work needed to decide whether $G$ is realizable in characteristic $m$ comes down to considering residue rings of $\Z_m[G]$.

Next, we collect some elementary, but extremely useful, observations about finite rings and their unit groups in characteristic $m$.

\begin{Lem}\label{Units of Zm}
Let $G$ be a finite group and let $m \ges 2$. If $G$ is realizable in characteristic $m$, then $\Z_{m}^\times \les Z(G)$. Hence, there are only finitely many possible characteristics in which $G$ could be realized.
\end{Lem}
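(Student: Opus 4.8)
The plan is to exploit the fact that a ring $R$ of characteristic $m$ contains a central copy of $\Z_m$. Concretely, suppose $R$ is a ring with $\rchar(R) = m$ and $R^\times = G$. The unique ring homomorphism $\Z \to R$ sending $1 \mapsto 1_R$ has kernel $m\Z$, so its image $S = \{k \cdot 1_R : k \in \Z\}$ is a subring isomorphic to $\Z_m$. Since $(k \cdot 1_R) r = kr = r (k \cdot 1_R)$ for every $r \in R$, this prime subring lies in the center $Z(R)$.

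Next I would pass to units. A class $k \in \Z_m^\times$ is invertible modulo $m$, so $k \cdot 1_R$ is a unit of $R$ with inverse $k^{-1} \cdot 1_R$; thus the isomorphism $S \cong \Z_m$ restricts to an injective group homomorphism $\Z_m^\times \hookrightarrow R^\times = G$. Because these units already lie in $Z(R)$, they commute with every element of $R$, and in particular with every unit; hence their image is contained in $Z(R) \cap R^\times \subseteq Z(R^\times) = Z(G)$. This realizes $\Z_m^\times$ as a subgroup of $Z(G)$, which is the first assertion.

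For the finiteness statement I would simply compare orders. Since $\Z_m^\times \les Z(G)$ and $G$ is finite, we obtain $\varphi(m) = |\Z_m^\times| \les |Z(G)| \les |G|$, a fixed bound. The elementary number-theoretic fact that $\varphi(m) \to \infty$ as $m \to \infty$ (e.g., via the bound $\varphi(m) \ges \sqrt{m/2}$) then shows that only finitely many integers $m$ can satisfy $\varphi(m) \les |G|$, so only finitely many characteristics $m \ges 2$ can occur.

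I do not anticipate a serious obstacle here; the argument is short once the right object---the central prime subring $S \cong \Z_m$---is isolated. The only two points that require genuine care are the observation that a central unit of $R$ is automatically central in the group $R^\times$ (that is, $Z(R) \cap R^\times \subseteq Z(R^\times)$), and the invocation of $\varphi(m) \to \infty$ to convert the order bound $\varphi(m) \les |G|$ into finiteness of the admissible characteristics.
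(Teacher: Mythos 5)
Your argument is correct and is essentially the paper's own proof: both identify the central prime subring of $R$ isomorphic to $\Z_m$, conclude $\Z_m^\times \les Z(G)$ from centrality of its units, and deduce finiteness from the fact that $|\Z_m^\times| = \varphi(m)$ eventually exceeds $|G|$. You have simply spelled out the details (the injectivity of $\Z_m^\times \hookrightarrow G$, the inclusion $Z(R)\cap R^\times \subseteq Z(R^\times)$, and the bound $\varphi(m)\ges\sqrt{m/2}$) that the paper leaves implicit.
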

\begin{proof}
Let $R$ be a ring of characteristic $m$ such that $R^\times = G$. Then, $R$ contains a copy of the ring $\Z_m$ that is central in $R$, and hence $\Z_{m}^\times \les Z(G)$. The second claim is true because for a fixed finite group $G$, it is possible to find $k$ such that $|\Z_{m}^\times| > |G|$ for all $m \ges k$.
\end{proof}

As noted in \cite[Lem.\ 6]{DavisOcchSimple} and \cite[Prop.\ 2.2]{ChebLockDihedral}, if $R$ is a ring of characteristic $m$ such that $R^\times \cong G$, then the natural embedding $G \to R$ extends to a ring homomorphism $\phi: \Z_{m}[G] \to R$. The image of $\phi$ is a (possibly proper) subring of $R$ that also has group of units isomorphic to $G$. Hence, we obtain the following lemma, which is the basis for much of our subsequent work.

\begin{Lem}\label{Group ring image}\cite[Lem.\ 6]{DavisOcchSimple}, \cite[Prop.\ 2.2]{ChebLockDihedral} Let $G$ be a finite group and let $m \ges 2$. If $G$ is realizable in characteristic $m$, then there exists a two-sided ideal $I$ of $\Z_{m}[G]$ such that $(\Z_{m}[G]/I)^\times \cong G$. 
\end{Lem}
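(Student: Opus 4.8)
The plan is to start from a concrete realization and transport the problem into $\Z_m[G]$ using the defining property of the group ring. Suppose $G$ is realizable in characteristic $m$, so there is a ring $R$ with $\rchar(R) = m$ together with a group isomorphism $\psi \colon G \to R^\times$. Because $\rchar(R) = m$, the prime subring of $R$ is a central copy of $\Z_m$, which furnishes a unital ring homomorphism $\Z_m \to R$ whose image is central. First I would combine this with $\psi$ to build a ring homomorphism $\phi \colon \Z_m[G] \to R$, defined on the group elements by $\phi(g) = \psi(g)$ and extended $\Z_m$-linearly on the coefficients. The verification that $\phi$ is multiplicative reduces to the identity $\psi(gh) = \psi(g)\psi(h)$ together with the centrality of the coefficients, so this step is routine once the characteristic hypothesis is in hand.

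Next I would set $I := \Ker\phi$, which is automatically a two-sided ideal of $\Z_m[G]$, and apply the first isomorphism theorem to identify $\Z_m[G]/I$ with the image subring $S := \phi(\Z_m[G]) \les R$. With this identification, the claim $(\Z_m[G]/I)^\times \cong G$ becomes the assertion that $S^\times \cong G$, so the entire lemma comes down to understanding the unit group of the image.

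The hard part will be controlling how units behave under passage from $R$ to the possibly proper subring $S$, and this is the step I expect to be the main obstacle. The key is a two-sided containment. On one hand, every element of $\psi(G) = R^\times$ equals some $\phi(g)$ and hence lies in $S$, and it is invertible there since its inverse $\psi(g^{-1}) = \phi(g^{-1})$ also lies in $S$; thus $R^\times \les S^\times$. On the other hand, any $s \in S^\times$ has a two-sided inverse inside $S \les R$, so $s$ is already a unit of $R$, giving $S^\times \les R^\times$. Combining these forces $S^\times = R^\times$. It is precisely this sandwiching—that a unit of a subring is a unit of the overring, and conversely that the inverse of each realized unit already lies in the image—which guarantees that no units are gained and none are lost when we restrict to $S$.

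Finally, chaining the isomorphisms $(\Z_m[G]/I)^\times \cong S^\times = R^\times \cong G$ completes the argument. The only genuine input beyond formal manipulation is the characteristic hypothesis, which is what licenses the central embedding of $\Z_m$ and hence the very existence of $\phi$; everything else is the first isomorphism theorem together with the elementary observation about unit groups of subrings.
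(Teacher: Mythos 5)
Your argument is correct and is essentially the paper's own: the authors likewise extend the embedding $G \to R^\times \subseteq R$ to a ring homomorphism $\phi \colon \Z_m[G] \to R$ (citing \cite[Lem.\ 6]{DavisOcchSimple} and \cite[Prop.\ 2.2]{ChebLockDihedral}), observe that the image subring has unit group isomorphic to $G$, and take $I = \Ker \phi$. Your sandwiching of $S^\times$ between the units of $R$ and the units realized by $\phi(G)$ is exactly the observation the paper leaves implicit in the phrase ``the image of $\phi$ is a (possibly proper) subring of $R$ that also has group of units isomorphic to $G$.''
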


In the case of 2-groups, we have the following theorem of Dol\v{z}an that gives a broad description of those finite rings $R$ for which $R^\times$ is a 2-group.

\begin{Thm}\label{Dolzan thm} \cite[Cor.\ 4.4]{DolzanUnits}
Let $G$ be a finite 2-group, and let $R$ be a finite ring such that $R^\times \cong G$. Then, $R$ is a direct product of rings, where every direct factor is isomorphic to one of the following rings:
\begin{enumerate}
\item The field $\F_{2^k+1}$, where $2^k+1$ is a power of a prime.
\item The field $\F_2$.
\item A local 2-ring.
\item The ring $\left\{ \left[ \begin{smallmatrix} a & b\\ 0 & c \end{smallmatrix} \right] : a, c \in \F_2, b \in V \right\}$, where $V$ is a vector space over $\F_2$.
\item A 2-ring such that its group of units $G$ can be written as a product of its two proper subgroups, either $e+\overline{e}G$ and $\overline{e}+eG$, or $e+G\overline{e}$ and $\overline{e}+Ge$, for a nontrivial idempotent $e$.
\end{enumerate}
\end{Thm}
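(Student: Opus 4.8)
The plan is to combine the Artin--Wedderburn structure theory of finite semisimple rings with an analysis of the Jacobson radical, organized around the idempotents of $R$. First I would reduce to the indecomposable case: a finite ring is a direct product of indecomposable rings (split off the central idempotents, beginning with the primary components $R = \prod_p R_p$ coming from the primary decomposition of the additive group), and $R^\times$ is the corresponding direct product of the factors' unit groups, each of which must again be a 2-group. So it suffices to show that an indecomposable finite ring $R$ with $R^\times$ a 2-group is isomorphic to one of the five listed types. Note that an indecomposable finite ring has prime-power characteristic $p^a$, since otherwise the primary components of its additive group would furnish a nontrivial central idempotent; in particular its additive group is a $p$-group.

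Next I would pin down the semisimple quotient. Writing $J = J(R)$ for the Jacobson radical, $R/J \cong \prod_i M_{n_i}(\F_{q_i})$ by Artin--Wedderburn, and the surjection $R^\times \to (R/J)^\times$ (units lift modulo the nil ideal $J$) forces each $GL_{n_i}(\F_{q_i})$ to be a 2-group. A direct order computation, $|GL_n(\F_q)| = q^{n(n-1)/2}\prod_{i=1}^n(q^i-1)$, shows this happens only when $n_i = 1$ and $q_i - 1$ is a power of $2$; that is, each simple factor is the field $\F_2$ or a field $\F_{2^k+1}$ with $2^k+1$ a prime power. Moreover, the kernel $1 + J$ of that surjection has order $|J|$, a power of $p$, so being a subgroup of a 2-group it forces either $p = 2$ (making $R$ a 2-ring) or $J = 0$.

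Using these two facts I would dispatch the local cases. If $R$ has no nontrivial idempotents it is local, so $R/J$ is a single field. When that field is $\F_{2^k+1}$ with $k \ge 1$ (hence $p$ odd), the previous paragraph forces $J = 0$ and $R = \F_{2^k+1}$ (type 1); when the residue field is $\F_2$ we get either $R = \F_2$ (type 2, if $J = 0$) or a local 2-ring (type 3). This disposes of everything except indecomposable rings possessing a nontrivial idempotent $e$. Since indecomposability makes $e$ non-central, and since an indecomposable ring of odd characteristic would collapse to a single field $\F_{2^k+1}$ (which has no nontrivial idempotents), such an $R$ must be a 2-ring.

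The remaining, and hardest, step is to analyze this non-central idempotent via the Peirce decomposition $R = eRe \oplus eR\overline e \oplus \overline e R e \oplus \overline e R\overline e$ (with $\overline e = 1 - e$) and extract the unit-group structure. I would determine which elements of $R$ are units in terms of this grading, show that the sets $e + \overline e G$ and $\overline e + eG$ (and their mirror images) are proper subgroups of $G = R^\times$ whose product is all of $G$, and isolate the degenerate situation---one off-diagonal corner vanishing with both diagonal corners equal to $\F_2$---which produces exactly the triangular ring of type 4, the general case giving the factorization of type 5. The main obstacle is precisely here: verifying closure of these candidate subgroups under the twisted multiplication induced by the Peirce grading, proving their product exhausts $G$, and cleanly separating the type-4 ring from the generic type-5 behavior. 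I expect the bookkeeping around non-central idempotents, rather than any single hard idea, to be the crux.
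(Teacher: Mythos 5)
First, note that the paper does not prove this theorem at all: it is quoted verbatim from Dol\v{z}an \cite[Cor.\ 4.4]{DolzanUnits} and used as a black box, so there is no internal proof to compare your approach against. Judged on its own terms, the first two-thirds of your outline is correct and standard: splitting $R$ into indecomposable factors via central idempotents, observing that an indecomposable finite ring has prime-power characteristic, using Artin--Wedderburn together with $|GL_n(\F_q)| = q^{n(n-1)/2}\prod_{i=1}^n(q^i-1)$ to force each simple factor of $R/J$ to be a field $\F_q$ with $q-1$ a power of $2$, and using $|1+J|=|J|$ to force $p=2$ or $J=0$. The local cases (types 1--3) then fall out exactly as you say.

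The genuine gap is the step you yourself flag as the crux, and it is not a minor bookkeeping issue: nothing in your write-up actually establishes that $e+\overline{e}G$ and $\overline{e}+eG$ are subgroups, that their product is $G$, or that the degenerate case is precisely the triangular ring of type 4. For instance, closure of $e+\overline{e}G$ under multiplication reduces to showing that $e+\overline{e}he+\overline{e}h\overline{e}$ is a unit for every $h\in G$, and this is false in a general ring with a non-central idempotent (a unit of $M_2(\F_q)$ can have a non-invertible diagonal Peirce block). What rescues it here is a structural fact you never isolate: since $R/J$ is a product of fields, every idempotent of $R$ is central modulo $J$, so the off-diagonal Peirce components $eR\overline{e}$ and $\overline{e}Re$ lie in $J$ and the diagonal components of any unit are units of the corner rings. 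That observation is the engine behind types 4 and 5, and without it (and without the subsequent argument that the two subgroups' product exhausts $G$, which is where the ``either\dots or\dots'' dichotomy in type 5 comes from) the hardest part of the theorem remains unproved. As it stands your proposal is a plausible plan for the easy half plus an accurate description of where the difficulty lies, rather than a proof.
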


For products of nonabelian, indecomposable 2-groups, we can say more.

\begin{Prop}\label{Ring char prop}
Let $G = \prod_{i=1}^k G_i$, where $k \ges 1$ and each $G_i$ is a nonabelian, indecomposable 2-group. If $G$ is the group of units of a finite ring $R$, then $\rchar(R) = 2^m$ for some $m \ges 1$.
\end{Prop}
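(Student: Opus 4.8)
The plan is to combine Dol\v{z}an's structural result (Theorem \ref{Dolzan thm}) with the Krull--Remak--Schmidt theorem. Since each $G_i$ is a $2$-group, so is $G = R^\times$, and Theorem \ref{Dolzan thm} applies: I would write $R \cong \prod_j R_j$, where each $R_j$ is one of the five listed types. Among these, the rings of types (2)--(5) are all $2$-rings, so each has characteristic a power of $2$; the only possible factors of odd characteristic are the fields $\F_{2^s+1}$ of type (1) with $s \ges 1$, each of which has characteristic equal to the odd prime dividing $2^s+1$. Because the characteristic of a finite direct product of rings is the least common multiple of the characteristics of its factors, $\rchar(R)$ is a power of $2$ if and only if no type-(1) field occurs among the $R_j$. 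Thus the problem reduces to ruling out type-(1) factors.

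Next I would use the hypothesis that every $G_i$ is nonabelian. Taking unit groups gives $G = R^\times \cong \prod_j R_j^\times$. A type-(1) field $\F_{2^s+1}$ contributes the cyclic factor $C_{2^s}$ of order $2^s \ges 2$, which is an abelian, directly indecomposable $2$-group. Hence, if some $R_{j_0} \cong \F_{2^s+1}$ occurred, then $G \cong C_{2^s} \times H$ for some group $H$, and refining $H$ into a direct product of indecomposable subgroups would exhibit $G$ as a direct product of indecomposables, one of which, namely $C_{2^s}$, is abelian.

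Finally I would invoke the Krull--Remak--Schmidt theorem: since $G$ is finite, any two decompositions of $G$ into directly indecomposable subgroups have the same number of factors, matched up to isomorphism. The given decomposition $G = \prod_{i=1}^k G_i$ consists of indecomposable factors that are all nonabelian, so every indecomposable direct factor of $G$ must be nonabelian. This contradicts the abelian factor $C_{2^s}$ produced above. Therefore no type-(1) field occurs, every $R_j$ is a $2$-ring, and $\rchar(R)$ is a power of $2$; since $G$ is nontrivial we have $R \neq 0$, and so $\rchar(R) = 2^m$ for some $m \ges 1$.

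The step I expect to be the main obstacle is the Krull--Remak--Schmidt argument: one must check carefully that an indecomposable cyclic unit group coming from a single field factor genuinely forces an abelian indecomposable \emph{direct factor} in a full decomposition of $G$, so that it may legitimately be compared against the nonabelian $G_i$. The remaining reductions via Dol\v{z}an's theorem and the least-common-multiple computation for the characteristic are routine once this comparison is in place.
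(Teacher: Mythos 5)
Your proposal is correct and follows essentially the same route as the paper: apply Dol\v{z}an's theorem to write $R$ as a product of the five types, observe that a type-(1) factor would contribute an abelian indecomposable direct factor $C_{2^s}$ to $G$, and rule this out against the hypothesis that all $G_i$ are nonabelian. The Krull--Remak--Schmidt step you flag as the main obstacle is exactly the point the paper leaves implicit (it simply says a field factor ``violates the assumption that each direct factor of $G$ is nonabelian''), and your explicit justification of it is valid.
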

\begin{proof}
Assume $R$ is such that $R^\times \cong G$. By Theorem \ref{Dolzan thm}, we may express $R$ as a direct product $R \cong \prod_{j=1}^t R_j$, where $t \ges 1$ and each $R_j$ one of the rings listed in Theorem \ref{Dolzan thm}. Note that $G \cong \prod_{j=1}^t R_j^\times$. If some $R_j$ is isomorphic to a field $\F_{2^k+1}$, then $R_j^\times \cong C_{2^k}$, which violates the assumption that each direct factor of $G$ is nonabelian. Hence, each ring $R_j$ must be one the types listed in parts 2--5 of Theorem \ref{Dolzan thm}, and any such ring is a 2-ring. Therefore, $R$ must have characteristic $2^m$ for some $m \ges 1$.
\end{proof}

For a direct product of nonabelian, indecomposable 2-groups to be realizable, it is sufficient that each direct factor be realizable. If such a group $G$ is realized as the unit group of $R$, then by Proposition \ref{Ring char prop} the characteristic of $R$ is $2^m$ for some $m \ges 1$. But, Lemma \ref{Group ring image} shows that we may also assume $R$ is a residue ring of the group ring $\Z_{2^m}[G]$. Hence, it is beneficial to study these group rings more closely.  

\begin{Lem}\label{Group ring is local}
Let $G$ be a 2-group. Then, for all $m \ges 1$, the group ring $\Z_{2^m}[G]$ is local, with unique maximal ideal $M = \{\sum_i \lambda_i g_i : \sum_i \lambda_i \equiv 0 \mod 2\}$ and residue field $\F_2$. Consequently, the unit group of $\Z_{2^m}[G]$ is $\Z_{2^m}[G] \setminus M = 1 + M$.
\end{Lem}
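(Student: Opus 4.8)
The plan is to exhibit $M$ as the kernel of a natural ring homomorphism onto $\F_2$ and then to prove that $M$ is nilpotent; locality and the description of the unit group will follow formally. Write $R = \Z_{2^m}[G]$. First I would introduce the augmentation map $\varepsilon \colon R \to \F_2$ sending $\sum_i \lambda_i g_i \mapsto \sum_i \lambda_i \bmod 2$; this is the composite of the usual augmentation $R \to \Z_{2^m}$ (a ring homomorphism, since every $g \in G$ maps to $1$) with the reduction $\Z_{2^m} \to \F_2$, so it is a surjective ring homomorphism. Its kernel is precisely $M = \{\sum_i \lambda_i g_i : \sum_i \lambda_i \equiv 0 \bmod 2\}$, so $M$ is a two-sided ideal with $R/M \cong \F_2$. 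In particular $M$ is a maximal ideal, and since $\varepsilon$ carries units to units, $M$ contains no units of $R$.

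The crux is to show that $M$ is nilpotent. For this I would reduce coefficients modulo $2$, i.e.\ consider the surjection $\pi \colon R \to \F_2[G]$ with kernel $2R$. Under $\pi$, the ideal $M$ maps onto the augmentation ideal $\overline{M}$ of the group algebra $\F_2[G]$. Because $G$ is a $2$-group and $\F_2$ has characteristic $2$, the augmentation ideal $\overline{M}$ is nilpotent, say $\overline{M}^{\,N} = 0$; applying $\pi$ then gives $M^N \subseteq 2R$. The key step is an iteration: since $2$ is central and $R$ is unital, $(2R)(2R) = 4R$, and more generally $M^{jN} \subseteq 2^j R$ by induction on $j$, using $M^{(j+1)N} = M^{jN} M^N \subseteq (2^j R)(2R) = 2^{j+1} R$. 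Taking $j = m$ yields $M^{mN} \subseteq 2^m R = 0$, so $M$ is nilpotent.

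With $M$ nilpotent and maximal, locality is immediate: $M \subseteq J(R)$ because nilpotent ideals lie in the Jacobson radical, while $J(R) \subseteq M$ because $M$ is maximal, so $J(R) = M$ and $R/J(R) \cong \F_2$ is a division ring, which is exactly the defining condition for $R$ to be local with maximal ideal $M$ and residue field $\F_2$. Finally, the units of a local ring are precisely the complement of its maximal ideal; and $x \notin M$ forces $\varepsilon(x) = 1$, whence $\varepsilon(x-1) = 0$ and $x \in 1 + M$, while conversely $\varepsilon(1+m) = 1$ for all $m \in M$ shows $1 + M \subseteq R \setminus M$. Thus $R^\times = R \setminus M = 1 + M$.

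I expect the main obstacle to be the nilpotency of the augmentation ideal $\overline{M}$ of $\F_2[G]$, which is the one genuinely group-theoretic input. Should a self-contained argument be preferred over citing this standard fact, I would instead prove nilpotency of $M$ directly by induction on $|G|$: a nontrivial $2$-group has a central element $z$ of order $2$, for which $z-1$ is central and satisfies $(z-1)^{j+1} = (-2)^j(z-1)$, hence $(z-1)^{m+1} = 0$; passing to $R/(z-1)R \cong \Z_{2^m}[G/\langle z\rangle]$ and combining the inductive nilpotency of the image of $M$ with the nilpotency of the central ideal $(z-1)R$ then yields that $M$ itself is nilpotent.
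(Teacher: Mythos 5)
Your proof is correct and follows essentially the same route as the paper's: identify $M$ as the kernel of the mod-$2$ augmentation $\Z_{2^m}[G]\to\F_2$, reduce modulo $2$ to land in the nilpotent augmentation ideal of $\F_2[G]$, lift nilpotency back via $M^{kN}\subseteq 2^kR$, and conclude locality from ``nilpotent maximal ideal equals Jacobson radical.'' Your explicit iteration $M^{jN}\subseteq 2^jR$ and the optional self-contained induction on $|G|$ using a central involution are nice additions, but they do not change the underlying argument.
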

\begin{proof}
Let $A = \Z_{2^m}[G]$ and let $J$ be the Jacobson radical of $A$. By composing the augmentation homomorphism $A \to \Z_{2^m}$ with reduction modulo 2, we obtain a surjective ring homomorphism $A \to \Z_2$ with kernel $M$. The ideal $M$ is maximal, so $J \subseteq M$. To complete the proof, it suffices to show that $M \subseteq J$.

Let $\pi: A \to A/2A$ be the canonical quotient map. Then, $A/2A \cong \Z_2[G]$, and $\pi(M)$ lies in the augmentation ideal of $A/2A$. This augmentation ideal is nilpotent \cite[Thm.\ 6.3.1]{PolcinoSehgal}, so $\pi(M^k) = \pi(M)^k = (0)$ in $A/2A$ for some $k \ges 1$. Hence, $M^k \subseteq 2A$, which means that $M^{kn} = (0)$ in $A$ for some $n \ges 1$.  Thus, $M$ is a nilpotent ideal, and $M \subseteq J$ by \cite[Prop.\ IV.7]{McDonald}. Finally, if $u \in A \setminus M$, then $u = 1+a$ for some $a \in M$, and hence $u$ is a unit in $A$.
\end{proof}

We summarize the results of this section in the following theorem.

\begin{Thm}\label{R=2G thm}
Let $G$ be a finite 2-group that is realizable in characteristic $2^m$ for some $m \ges 1$. Then, there exists a ring $R$ such that $R^\times \cong G$ and the following hold:
\begin{enumerate}[(i)]
\item $R$ is a residue ring of $\Z_{2^m}[G]$.
\item $R$ is local.
\item $|R|=2|G|$.
\item $G=1+M$, where $M$ is the maximal ideal of $R$.
\end{enumerate}
\end{Thm}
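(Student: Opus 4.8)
The plan is to assemble the theorem from the two workhorse lemmas of this section: Lemma \ref{Group ring image}, which realizes $G$ as a quotient of a group ring, and Lemma \ref{Group ring is local}, which pins down the structure of that group ring. Set $A := \Z_{2^m}[G]$. First I would apply Lemma \ref{Group ring image} with modulus $2^m$ (legitimate since $2^m \ges 2$ for $m \ges 1$): because $G$ is realizable in characteristic $2^m$, there is a two-sided ideal $I$ of $A$ with $(A/I)^\times \cong G$. Taking $R := A/I$ immediately gives clause (i) and $R^\times \cong G$. For the subtler clauses I would actually retain the explicit construction preceding Lemma \ref{Group ring image}: if $S$ is a realizing ring and $\theta \colon G \to S^\times$ an isomorphism, then $I$ may be taken as the kernel of the induced map $\phi \colon A \to S$, so that inside $R = \phi(A)$ the image of the group elements of $G$ is exactly $\theta(G) = R^\times$. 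I will need this sharper statement for (iv).

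Next I would establish locality. By Lemma \ref{Group ring is local}, $A$ is local with maximal ideal $M$ and residue field $\F_2$. Assuming $G$ is nontrivial (the case $G = 1$ is handled directly by $R = \F_2$), the unit group $R^\times \cong G$ is nontrivial, so $R \neq 0$ and $I \neq A$; hence $I$ contains no units and therefore $I \subseteq M$. It follows that $R = A/I$ is local with maximal ideal $M_R := M/I$ and residue field $R/M_R \cong A/M \cong \F_2$, which is clause (ii).

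From locality together with residue field $\F_2$, every $u \in R \setminus M_R$ reduces to the nonzero element of $\F_2$, i.e.\ $u \equiv 1 \pmod{M_R}$; so, exactly as in the proof of Lemma \ref{Group ring is local}, $R^\times = R \setminus M_R = 1 + M_R$. Invoking the sharp form of (i), the canonical copy of $G$ in $R$ coincides with $R^\times = 1 + M_R$, giving (iv) under the identification of $G$ with its image. Finally, clause (iii) is a counting step: $|R| = |R/M_R|\cdot|M_R| = 2\,|M_R|$, while $|M_R| = |1 + M_R| = |R^\times| = |G|$, so $|R| = 2|G|$.

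The only genuinely delicate point is (iv). The hard part is upgrading the abstract isomorphism $R^\times \cong G$ furnished by Lemma \ref{Group ring image} to the literal assertion that the images of the group elements fill out all of $1 + M_R$, rather than merely sitting inside it as an isomorphic copy. This is why I would carry along the explicit homomorphism $\phi$ from the construction behind Lemma \ref{Group ring image}, whose image of $G$ is by definition the full unit group; once that identification is secured, every remaining clause reduces to routine local-ring bookkeeping.
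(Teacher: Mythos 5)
Your proposal is correct and follows essentially the same route as the paper: clause (i) comes from Lemma \ref{Group ring image}, and clauses (ii)--(iv) come from pushing the local structure of $\Z_{2^m}[G]$ (Lemma \ref{Group ring is local}) through the quotient map, noting that the kernel lies in the maximal ideal and that the residue field $\F_2$ forces $R^\times = 1+M$ and $|R|=2|G|$. The paper phrases this with the Jacobson radical $J$ and $U=1+J$ rather than your $M_R$, and is terser about the identification of $G$ with $\pi(U)$, but the argument is the same.
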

\begin{proof}
Property (i) follows from Lemma \ref{Group ring image}. Let $J$ be the Jacobson radical of $\Z_{2^m}[G]$ and let $U = 1+J$ be the unit group of $\Z_{2^m}[G]$. Since the residue field of $\Z_{2^m}[G]$ is $\F_2$, we have $|U|=|J|=\frac{1}{2}|\Z_{2^m}[G]|$. Let $\pi: \Z_{2^m}[G] \to R$ be the quotient map. Then, $\Ker \pi$ is contained in $J$, so $\pi(J)$ is the unique maximal ideal of $R$ and $G = R^\times = \pi(U)$. The stated properties of $R$ follow.
\end{proof}

\section{Realizable groups of exponent 4}\label{Exponent 4 section}

Proposition \ref{Ring char prop} and Theorem \ref{R=2G thm} indicate that to see if a 2-group is realizable in characteristic $2^m$, we should study $\Z_{2^m}[G]$ and its residue rings. Unsurprisingly, this is easiest to do when $m=1$, and in this section we will prove the first of several theorems on the realizability of 2-groups in characteristic 2. Among these is Theorem \ref{thm:exp4nil2}, which shows that any finite 2-group of exponent 4 is realizable in characteristic 2.

Our approach is motivated by the relationship between $G$ and the unit group of $\Z_2[G]$. Recall that if $U$ is a group such that $G \les U$, then a normal complement of $G$ in $U$ is a normal subgroup $N$ of $U$ such that $U=GN$ and $G \cap N = \{1\}$.

\begin{Lem}\label{Norm comp lemma}
Let $G$ be a finite 2-group, and let $U = (\Z_2[G])^\times$. 
\begin{enumerate}[(1)]
\item Let $I$ be a two-sided ideal of $\Z_2[G]$. Then, $1+I$ is a normal subgroup of $U$.
\item Assume that $G$ has a normal complement $N$ in $U$. If $1+N$ is a two-sided ideal of $\Z_2[G]$, then $G$ is realizable in characteristic 2.
\end{enumerate}
\end{Lem}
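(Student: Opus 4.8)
The plan is to handle the two parts separately, relying throughout on Lemma \ref{Group ring is local}, which tells us that $\Z_2[G]$ is local with maximal ideal $M$ and unit group $U = 1+M$. For part (1), I would first observe that a proper two-sided ideal $I$ must lie inside the unique maximal ideal $M$, so that $1+a$ is a unit for every $a \in I$ and hence $1+I \subseteq U$. Closure is immediate from $(1+a)(1+b) = 1+(a+b+ab)$ with $a+b+ab \in I$, and $1 \in 1+I$. For inverses I would use locality to write $(1+a)^{-1} = 1+b$ and solve $a+b+ab=0$, which forces $b = -(a+ab) \in I$; thus $1+I$ is a subgroup. Normality is then clear since $u(1+a)u^{-1} = 1 + uau^{-1}$ and $uau^{-1} \in I$ because $I$ is two-sided.

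For part (2), I would write $I = 1+N$ for the given ideal and exploit that $\Z_2[G]$ has characteristic $2$. Since $N \subseteq U = 1+M$, each $n \in N$ has the form $n = 1+a$ with $a \in M$, so $1+n = a \in M$; this shows $I \subseteq M$ and, crucially, that $1+I = \{1+(1+n) : n \in N\} = N$, using $1+1=0$. I would then form the residue ring $R = \Z_2[G]/I$ and let $\pi$ be the quotient map, noting that $1 \notin I$ (as $1 \notin M$), so $R$ is nonzero of characteristic $2$.

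The core of the argument is to identify $R^\times$. Because $I \subseteq M$, the ring $R$ is again local with maximal ideal $M/I$, so its units are exactly $\pi(U)$. Restricting $\pi$ to $U$ yields a surjection $U \to R^\times$ whose kernel is $\{u \in U : u \in 1+I\} = U \cap N = N$, the last equality from $1+I = N \subseteq U$. Hence $R^\times \cong U/N$, and since $N$ is a normal complement of $G$ in $U$ we have $U/N \cong G$ by the second isomorphism theorem. Therefore $R$ is a ring of characteristic $2$ with $R^\times \cong G$, which is exactly what realizability in characteristic $2$ requires.

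The main obstacle is the passage in part (2) from the \emph{additive} hypothesis that $1+N$ is an ideal to the \emph{multiplicative} conclusion about unit groups. The decisive device is the characteristic-$2$ identity $1+1=0$, which converts the ideal $I = 1+N$ back into the subgroup $N$ and lets the kernel of $\pi|_U$ be computed precisely; combined with the locality of the quotient $R$ (so that $R^\times = \pi(U)$ with no extraneous units), this is what makes the chain $R^\times \cong U/N \cong G$ go through cleanly.
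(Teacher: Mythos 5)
Your proposal is correct and follows essentially the same route as the paper's proof: for (1) you use $I\subseteq M$ and $U=1+M$ plus the ideal property to get closure and normality, and for (2) you pass to $R=\Z_2[G]/I$, identify $R^\times$ as the image of $U$, compute the kernel as $N$ (using $1+I=N$ in characteristic $2$), and conclude $R^\times\cong U/N\cong G$. The only differences are that you spell out details the paper leaves implicit (properness of $I$, the inverse check, and the surjectivity of $U\to R^\times$ via locality of the quotient), all of which are fine.
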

\begin{proof}
(1) Let $M$ be the maximal ideal of $\Z_2[G]$. Then, $I \subseteq M$, and $U=1+M$ by Lemma \ref{Group ring is local}, so $1+I \subseteq U$. Moreover, since $I$ is an ideal, $1+I$ is closed under multiplication and under conjugation by elements of $U$. Hence, $1+I \lhd U$.

(2) Assume that $I:=1+N$ is a two-sided ideal of $\Z_2[G]$ and let $R = \Z_2[G]/I$. Then, $U$ surjects onto $R^\times$, and the kernel of this map is $N$, so $G \cong U/N \cong R^\times$.
\end{proof}

If a normal complement of $G$ in $(\Z_{2}[G])^\times$ always produced a corresponding two-sided ideal of $\Z_2[G]$, then determining whether $G$ is realizable could be answered solely by studying $(\Z_2[G])^\times$. Unfortunately, this is not the case; it is possible for a 2-group to have a normal complement in $(\Z_2[G])^\times$ and not be realizable in characteristic 2. See Question \ref{Norm comp, but not realizable}, where this is discussed for the cyclic group $C_8$ and also a nonabelian group of order 16. However, in the special case where $G$ has exponent 4, we are able to prove that a normal complement of $G$ in $(\Z_2[G])^\times$ exists and can be translated into a two-sided ideal of $\Z_2[G]$.

Recall that a group $G$ is said to be \textit{nilpotent} of class $n$ if there exists a central series of length $n$, that is, if there exists a normal series 
\begin{equation*}
{1} = G_0 \lhd G_1 \lhd  \cdots \lhd G_n = G
\end{equation*}
such that $G_{i+1}/G_i \les Z(G/G_i)$. Moreover, a group has \textit{nilpotency class $2$} if $G/Z(G)$ is abelian.  The next lemma contains properties of groups of nilpotency class $2$, which we will use to prove that all groups of exponent $4$ and nilpotency class $2$ are realizable. In what follows, $[a,b]$ denotes the commutator of two elements $a, b$ in a group $G$.

\begin{Lem}\cite[Proposition VI.1.k]{Schenkman}
\label{lem:nil2groups}
Let $G$ be a group of nilpotency class $2$, let $a,b$ be arbitrary elements of $G$, and let $m,n$ be arbitrary integers.  Then, the following hold:
\begin{itemize}
 \item[(i)] $[a^m, b^n] = [a,b]^{mn}$,
 \item[(ii)] $(ab)^n = a^n b^n [b,a]^{\binom{n}{2}}$.
\end{itemize}
\end{Lem}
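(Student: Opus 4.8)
The plan is to use the defining property of nilpotency class $2$—namely that all commutators $[a,b]$ lie in the center $Z(G)$—so that commutators behave bilinearly and additively. The key observation I would establish first is the standard commutator identity $[ab,c] = [a,c]^b [b,c]$, which in our setting simplifies: since $[a,c] \in Z(G)$, conjugation by $b$ fixes it, so $[ab,c] = [a,c][b,c]$. Dually, $[a,bc] = [a,b][a,c]$. In other words, the commutator map is bilinear once its values are central. This bilinearity is the engine behind both parts.

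For part (i), I would prove $[a^m,b]=[a,b]^m$ by induction on $m \ges 0$ using left-additivity, handling negative $m$ by noting $[a^{-1},b]=[a,b]^{-1}$ (which follows from $1=[1,b]=[a\cdot a^{-1},b]=[a,b][a^{-1},b]$). Applying the symmetric argument in the second slot then yields $[a^m,b^n]=[a,b]^{mn}$. The main subtlety to get right is simply the sign/exponent bookkeeping when $m$ or $n$ is negative, but centrality makes this routine.

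For part (ii), I would argue by induction on $n \ges 1$. The base case $n=1$ is trivial. For the inductive step, write $(ab)^{n+1}=(ab)^n(ab)=a^n b^n [b,a]^{\binom{n}{2}}(ab)$, and then push the extra factor of $a$ leftward past $b^n$. Moving $a$ past $b^n$ produces a commutator factor: since $b^n a = a b^n [b^n,a] = a b^n [b,a]^{n}$ by part (i), I can collect all the central commutator terms. Tracking the total exponent on $[b,a]$, the count increases from $\binom{n}{2}$ to $\binom{n}{2}+n=\binom{n+1}{2}$, which is exactly the claimed exponent, while the powers of $a$ and $b$ assemble into $a^{n+1}b^{n+1}$ because all commutators are central and can be freely extracted.

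The step I expect to require the most care is the exponent accounting in part (ii): one must verify that exactly $n$ copies of $[b,a]$ are generated when commuting the stray $a$ through $b^n$, so that the binomial coefficients telescope correctly via $\binom{n}{2}+n=\binom{n+1}{2}$. Everything else reduces to the bilinearity of the commutator map together with the freedom to move central elements anywhere in a product.
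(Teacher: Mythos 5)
The paper does not prove this lemma at all---it is quoted directly from Schenkman \cite[Proposition VI.1.k]{Schenkman}---so there is no internal proof to compare against. Your argument is the standard textbook one and is correct: centrality of commutators in a class-$2$ group makes the commutator map bilinear via $[ab,c]=[a,c]^b[b,c]=[a,c][b,c]$, part (i) follows by induction in each slot, and the inductive step for part (ii) with the identity $\binom{n}{2}+n=\binom{n+1}{2}$ checks out (e.g.\ $(ab)^{n+1}=a^nb^n[b,a]^{\binom{n}{2}}ab=a^{n+1}b^{n+1}[b,a]^{n+\binom{n}{2}}$ using $b^na=ab^n[b,a]^n$). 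The one small omission is that the lemma as stated allows \emph{arbitrary} integers, and your induction for (ii) only covers $n\ges 1$; the cases $n=0$ and $n<0$ are routine (for instance $(ab)^{-1}=b^{-1}a^{-1}=a^{-1}b^{-1}[b,a]=a^{-1}b^{-1}[b,a]^{\binom{-1}{2}}$) and, in any event, only non-negative exponents are used in the paper's applications of this lemma, so this is a completeness quibble rather than a gap in the method.
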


\begin{Lem}
 \label{lem:nil2exp4}
 Let $G$ be a $2$-group of nilpotency class $2$ and exponent $4$.  Then, the following hold:
 \begin{itemize}
  \item[(i)] for all $a,b \in G$, $[b,a]^2 = 1$,
  \item[(ii)] for all $a \in G$, $a^2 \in Z(G)$.
 \end{itemize}
\end{Lem}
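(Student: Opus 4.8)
The plan is to deduce both statements directly from Lemma \ref{lem:nil2groups}, exploiting the hypothesis that $G$ has exponent $4$ (so that fourth powers are trivial) together with the fact that $G$ is a $2$-group (so that every element has $2$-power order).

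For (i), I would apply the power formula in Lemma \ref{lem:nil2groups}(ii) with the exponent $n = 4$. This gives
\begin{equation*}
(ab)^4 = a^4 b^4 [b,a]^{\binom{4}{2}} = a^4 b^4 [b,a]^6.
\end{equation*}
Since $G$ has exponent $4$, the left-hand side is trivial and $a^4 = b^4 = 1$, so the identity collapses to $[b,a]^6 = 1$. Now $[b,a]$ is an element of the $2$-group $G$, hence its order is a power of $2$; an integer that is simultaneously a power of $2$ and a divisor of $6$ must be $1$ or $2$, so $[b,a]^2 = 1$. Equivalently, one could note that $[b,a]^4 = [b^4, a] = 1$ by Lemma \ref{lem:nil2groups}(i) and combine this with $[b,a]^6 = 1$ using $\gcd(4,6) = 2$.

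For (ii), I would invoke Lemma \ref{lem:nil2groups}(i) with $m = 2$ and $n = 1$ to write $[a^2, b] = [a,b]^2$ for every $b \in G$. Since $[a,b] = [b,a]^{-1}$, part (i) just proved gives $[a,b]^2 = 1$, whence $[a^2, b] = 1$ for all $b \in G$. Thus $a^2$ commutes with every element of $G$, that is, $a^2 \in Z(G)$.

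The argument is short and I do not anticipate a serious obstacle; the only real idea is to feed $n = 4$ into the class-$2$ power-expansion formula so that the exponent-$4$ hypothesis annihilates the pure powers $a^4, b^4$ and isolates a constraint on $[b,a]$ alone. The mild subtlety is that this constraint, $[b,a]^6 = 1$, does not by itself force order dividing $2$ in a general group; one must additionally use that $G$ is a $2$-group (or bring in the independent relation $[b,a]^4 = 1$) to rule out order $3$ or $6$.
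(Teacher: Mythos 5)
Your proof is correct and follows essentially the same route as the paper: apply Lemma \ref{lem:nil2groups}(ii) with $n=4$ to get $[b,a]^6=1$ and then deduce $[b,a]^2=1$, and use Lemma \ref{lem:nil2groups}(i) for part (ii). The paper passes from $[b,a]^6$ to $[b,a]^2$ via the implicit relation $[b,a]^4=[b,a^4]=1$, which is exactly the alternative you mention; your primary justification via the $2$-group hypothesis is an equally valid minor variant.
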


\begin{proof}
 We will show (i) first.  Let $a, b \in G$.  Then, since $G$ has exponent $4$ and using Lemma \ref{lem:nil2groups}(ii), we have
 \[1 = (ab)^4 = a^4b^4[b,a]^6 = [b,a]^2. \]
 Finally, since $[b,a^2] = [b,a]^2 = 1$ for all $a,b \in G$ by (i) and Lemma \ref{lem:nil2groups}(i), we have that $a^2 \in Z(G)$ for all $a \in G$.
\end{proof}

In general, the unit group of $\Z_m[G]$ is decomposable \cite[Chap.\ 8]{PolcinoSehgal}, and we have
\begin{equation*}
(\Z_m[G])^\times \cong \Z_m^\times \times U_1(G),
\end{equation*}
where $U_1(G) = \{\sum_i \lambda_i g_i : \sum_i \lambda_i = 1\}$ is the subgroup of units with coefficient sum 1. Of course, when $m=2$, $(\Z_2[G])^\times = U_1(G)$ because $\Z_2^\times$ is trivial. If $m=p$ is a prime, then $U_1(G)$ is known as the \textit{mod $p$ envelope} of $G$. The following result by Moran and Tench provides a sufficient condition for a $p$-group $G$ to have a normal complement in the mod $p$ envelope of $G$ (and hence in the group of units of $\Z_p[G]$) and is crucial to proving the realizability of groups with exponent $4$. (For further work regarding whether a finite $p$-group $G$ has a normal complement in $U_1(G)$, see \cite{Ivory, Johnson}.)


\begin{Lem}\cite[Theorem 4]{MoranTench}
\label{lem:MoranTench*} Let $p$ be a prime and let $G$ be a finite $p$-group. 
 Suppose that there exists a binary operation, $*$, on the set $G$ such that $G$ becomes an elementary abelian $p$-group under $*$.  Then there exists $N \lhd \left(\Z_p[G] \right)^\times$ such that $\left(\Z_p[G] \right)^\times/N \cong G$ if for all $a,b,c \in G$ the following two conditions hold:
 \begin{itemize}
  \item[(1)] $(c(a*b))*c = (ca)*(cb),$
  \item[(2)] $((a*b)c)*c = (ac)*(bc).$
 \end{itemize}
Moreover, \[ N = \Z_p^\times \times \left\{\sum_{g \in G} \lambda_g g \in \left(\Z_p[G]\right)^\times : \sideset{}{^*}\sum_{g \in G} \lambda_g^* g = 1\right\},\]
where $\sideset{}{^*}\sum$ denotes the summation in $G$ over $*$ and $\lambda_g^* g$ denotes $g * g * \cdots * g$ ($\lambda_g$ times).
\end{Lem}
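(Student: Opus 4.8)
The plan is to exhibit the displayed set $N$ as a \emph{normal complement} of $G$ in $(\Z_p[G])^\times$; once this is done, the isomorphism $(\Z_p[G])^\times/N \cong G$ is immediate from the second isomorphism theorem, since a normal complement $N$ satisfies $(\Z_p[G])^\times/N = G N/N \cong G/(G \cap N) = G$. Because $(\Z_p[G])^\times \cong \Z_p^\times \times U_1(G)$ with $\Z_p^\times$ central, it suffices to build a normal complement $N_0$ of $G$ inside $U_1(G)$ and then take $N = \Z_p^\times \times N_0$. To that end I would study the set map $\Phi\colon U_1(G) \to G$ given by $\Phi\left(\sum_g \lambda_g g\right) = \sideset{}{^*}\sum_g \lambda_g^* g$. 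This is well defined because $(G,*)$ is elementary abelian: exponent $p$ makes each $*$-power depend only on $\lambda_g \bmod p$, and commutativity of $*$ makes the $*$-sum independent of the order of the terms. Condition (1) with $c=1$ gives $(a*b)*1 = a*b$ for all $a,b$, so the group identity $1$ is the identity of $(G,*)$; consequently $\Phi(g)=g$ for every $g \in G$, and in particular $G \cap \Phi^{-1}(1) = \{1\}$.

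The heart of the argument is to establish the two equivariance relations
\[
\Phi(cu) = c\,\Phi(u), \qquad \Phi(uc) = \Phi(u)\,c \qquad (c \in G,\ u \in U_1(G)),
\]
where juxtaposition denotes the \emph{original} product of $G$. I would prove the first by inducting on the number of factors to show
\[
(ca_1)*\cdots*(ca_n) = \big(c\,(a_1 * \cdots * a_n)\big) * c^{*(n-1)},
\]
where $c^{*(n-1)}$ denotes $c*\cdots*c$ ($n-1$ times); the inductive step is precisely condition (1) together with commutativity of $*$. Applying this with the multiset of $a_i$ equal to each $g$ repeated $\lambda_g$ times yields $\Phi(cu) = \big(c\,\Phi(u)\big)*c^{*(n-1)}$ with $n = \sum_g \lambda_g$, and since $u \in U_1(G)$ forces $n \equiv 1 \pmod p$, the correction term $c^{*(n-1)}$ collapses to the identity, leaving $\Phi(cu) = c\,\Phi(u)$. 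The second relation follows identically from condition (2). I expect this to be the main obstacle, because it is exactly here that both hypotheses (1),(2) \emph{and} the coefficient-sum condition defining $U_1(G)$ are consumed: the cancellation of $c^{*(n-1)}$ works only inside $U_1(G)$, and without it the relations would fail.

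With the equivariance relations in hand, multiplicativity of $\Phi$ can be bootstrapped cleanly. Expanding a product, collecting coefficients, and using that $*$-powers are additive in the exponent gives $\Phi(uv) = \sideset{}{^*}\sum_{g,h}(\lambda_g\mu_h)^*(gh)$; grouping this $*$-sum by $h$ and applying the right relation to the inner sum rewrites it as $\sideset{}{^*}\sum_h \mu_h^*\big(\Phi(u)\,h\big) = \Phi\big(\Phi(u)\cdot v\big)$, and then the left relation with $c = \Phi(u) \in G$ gives $\Phi\big(\Phi(u)\cdot v\big) = \Phi(u)\,\Phi(v)$. Thus $\Phi$ is a group homomorphism from $U_1(G)$ onto $(G,\cdot)$ that is the identity on $G$, i.e.\ a retraction. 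It is worth emphasizing that although $\Phi$ is built from $*$, it is a homomorphism for the \emph{original} multiplication, not for $*$; this is exactly what makes the quotient isomorphic to $G$ rather than to the elementary abelian group $(G,*)$. Setting $N_0 = \ker \Phi$, the retraction property yields $G \cap N_0 = \{1\}$ and $U_1(G) = G\,N_0$ (writing $u = \Phi(u)\cdot(\Phi(u)^{-1}u)$), so $N_0$ is a normal complement of $G$ in $U_1(G)$. Finally $N = \Z_p^\times \times N_0$ is a normal complement of $G$ in $(\Z_p[G])^\times$, and tracking this back through $(\Z_p[G])^\times \cong \Z_p^\times \times U_1(G)$ recovers the displayed formula for $N$, giving $(\Z_p[G])^\times/N \cong G$.
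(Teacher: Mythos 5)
This lemma is imported by the paper directly from Moran and Tench (\cite[Theorem 4]{MoranTench}); the paper itself gives no proof, so there is nothing internal to compare against. Your argument is correct and is essentially a reconstruction of the Moran--Tench proof: the map $\Phi(u)=\sideset{}{^*}\sum_g\lambda_g^*g$ is their retraction, the induction $(ca_1)*\cdots*(ca_n)=(c(a_1*\cdots*a_n))*c^{*(n-1)}$ is where conditions (1) and (2) enter, and the collapse of $c^{*(n-1)}$ correctly uses that the augmentation of $u$ is $1$ so that $n\equiv 1\pmod p$. The one step worth double-checking in your write-up --- that the inner $*$-sum over $g$ in the multiplicativity computation equals $\mu_h^*\Phi(uh)$ and hence $\mu_h^*(\Phi(u)h)$ --- does go through, precisely because $u\in U_1(G)$ so the right equivariance relation applies there.
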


Let $G$ be a $2$-group of nilpotency class $2$ and exponent $4$, and let $\{x_1, \dots, x_n\}$ be an ordered minimal generating set for $G$.  By Lemmas \ref{lem:nil2groups} and \ref{lem:nil2exp4}, every element of $G$ can be written uniquely as 
\begin{equation}\label{eq:repn}
 \prod_{i = 1}^n x_i^{k_{1,i}} \prod_{\{i : x_i^2 \neq 1\}} \left(x_i^2\right)^{k_{2,i}} \prod_{\{i,j : i < j, [x_i,x_j] \notin \{x_k, x_k^2, 1\}\}} [x_j, x_i]^{k_{3,i,j}},
\end{equation}
where each $k_{1,i}, k_{2,i},$ and $k_{3,i,j}$ is either $0$ or $1$.  We note that, among the $x_i$, $x_i^2$, and $[x_j, x_i]$, the only elements that can be noncentral are the $x_i$.  Using the representation of each group element listed in Equation \ref{eq:repn}, if 
\[ a = \prod_{i = 1}^n x_i^{k_{1,i}} \prod_{\{i : x_i^2 \neq 1\}} \left(x_i^2\right)^{k_{2,i}} \prod_{\{i,j : i < j, [x_i,x_j] \notin \{x_k, x_k^2, 1\}\}} [x_j, x_i]^{k_{3,i,j}}\]
and 
\[ b = \prod_{i = 1}^n x_i^{\ell_{1,i}} \prod_{\{i : x_i^2 \neq 1\}} \left(x_i^2\right)^{\ell_{2,i}} \prod_{\{i,j : i < j, [x_i,x_j] \notin \{x_k, x_k^2, 1\}\}} [x_j, x_i]^{\ell_{3,i,j}},\]
then we may define
\begin{equation}\label{eq:*}
a*b = \prod_{i = 1}^n x_i^{k_{1,i} \oplus \ell_{1,i}} \prod_{\{i : x_i^2 \neq 1\}} \left(x_i^2\right)^{k_{2,i} \oplus \ell_{1,i}} \prod_{\{i,j : i < j, [x_i,x_j] \notin \{x_k, x_k^2, 1\}\}} [x_j, x_i]^{k_{3,i,j} \oplus \ell_{3,i,j}},
\end{equation}
where $\oplus$ denotes addition modulo $2$.  For ease of notation, we will write $a$ as \\ $(k_{1,i}; k_{2,i}; k_{3,i,j})$, and, if we write $b$ as $(\ell_{1,i}; \ell_{2,i}; \ell_{3,i,j})$, then we may write
\[a*b = (k_{1,i} \oplus \ell_{1,i}; k_{2,i} \oplus \ell_{2,i}; k_{3,i,j} \oplus \ell_{3,i,j}).\]  
We will now show that the operation $*$ satisfies the hypotheses of Lemma \ref{lem:MoranTench*}.

\begin{Prop}
\label{prop:nil2exp4*}
 Let $G$ be a $2$-group with nilpotency class (at most) $2$ and exponent $4$ with a representation of elements as in Equation \ref{eq:repn}.  Then, the operation $*$ defined in Equation \ref{eq:*} makes $G$ into an elementary abelian $2$-group, and $*$ satisfies equations (1) and (2) of Lemma \ref{lem:MoranTench*} for all $a,b,c \in G$. 
\end{Prop}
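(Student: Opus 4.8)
The plan is as follows. For the first assertion, the unique representation \eqref{eq:repn} sets up a bijection between $G$ and the set of coordinate triples $(k_{1,i};k_{2,i};k_{3,i,j})\in(\F_2)^d$, where $d$ is the total number of coordinates. By \eqref{eq:*}, the operation $*$ is precisely the transport of coordinatewise addition modulo $2$ across this bijection. Hence $(G,*)$ is isomorphic to $(\F_2)^d$, an elementary abelian $2$-group: its identity is $1\in G$ (all coordinates $0$), and each element is its own $*$-inverse.

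For conditions (1) and (2), set $\Phi=\langle x_i^2,[x_j,x_i]\rangle$. By Lemma \ref{lem:nil2exp4} every $x_i^2$ and every $[x_j,x_i]$ is central of order dividing $2$, so $\Phi\les Z(G)$ is an elementary abelian $2$-group, and the uniqueness of \eqref{eq:repn} says exactly that the included squares and commutators form an $\F_2$-basis of $\Phi$. Consequently the coordinate maps $k_{2,i}$ and $k_{3,i,j}$ restrict to $\F_2$-linear functionals on $\Phi$ (group homomorphisms $\Phi\to\F_2$), while the first-layer coordinates $(k_{1,i}(g))_i$ are just the image of $g$ under the quotient homomorphism $G\to G/\Phi\cong(\F_2)^n$; in particular $k_{1,i}(gh)=k_{1,i}(g)\oplus k_{1,i}(h)$. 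I would write each $g=u(g)\phi(g)$ with $u(g)=\prod_i x_i^{k_{1,i}(g)}$ the chosen coset representative and $\phi(g)\in\Phi$ central, so that the $\Phi$-coordinates of $g$ coincide with the $\F_2$-coordinates of $\phi(g)$.

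The central computation is a collection formula. Using the commutator identities of Lemma \ref{lem:nil2groups}, one rewrites a product of two generator-words as $u(c)\,u(a)=u(ca)\,\gamma(c,a)$, where the collection cocycle
\[\gamma(c,a)=\prod_i (x_i^2)^{c_i a_i}\,\prod_{j<k}[x_k,x_j]^{c_k a_j}\in\Phi\]
records the squares produced when like generators merge and the commutators produced when generators are reordered (here $c_i=k_{1,i}(c)$, $a_i=k_{1,i}(a)$). Since the $\phi(g)$ are central, this yields $\phi(ca)=\gamma(c,a)\,\phi(c)\,\phi(a)$ and likewise $\phi(c(a*b))=\gamma(c,a*b)\,\phi(c)\,\phi(a*b)$. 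Condition (1) reads $v(c(a*b))\oplus v(c)=v(ca)\oplus v(cb)$ in coordinates; the first-layer part holds automatically via $G\to G/\Phi$, and after cancelling the common $\phi(c),\phi(a),\phi(b)$ contributions the $\Phi$-layer collapses to the single requirement $\gamma(c,a*b)=\gamma(c,a)\,\gamma(c,b)$. I would verify this factor by factor: because $x_i^2$ and $[x_k,x_j]$ have order at most $2$ and $k_{1,i}(a*b)=a_i\oplus b_i\equiv a_i+b_i\pmod 2$, both the square part and the commutator part of $\gamma$ are additive modulo $2$ in the second argument. Condition (2) follows symmetrically from the right-handed cocycle $\gamma'(a,c)=\prod_i(x_i^2)^{a_i c_i}\prod_{j<k}[x_k,x_j]^{a_k c_j}$, which is additive in its first argument for the same reason.

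The step I expect to be the genuine obstacle is \emph{not} the bilinear arithmetic, which is routine, but the bookkeeping forced by the exclusion conditions in \eqref{eq:repn} (the restrictions $x_i^2\neq1$ and $[x_i,x_j]\notin\{x_k,x_k^2,1\}$): a priori the terms of $\gamma(c,a)$ need not be among the chosen basis elements and would have to be re-expressed, interacting awkwardly across layers. The way around this is to prove the reduced identity $\gamma(c,a*b)=\gamma(c,a)\,\gamma(c,b)$ as an equation in the group $\Phi$ \emph{before} passing to coordinates; since every coordinate functional is a homomorphism $\Phi\to\F_2$, it automatically carries this group identity to the desired XOR relation, so the redundant terms never need to be resolved explicitly.
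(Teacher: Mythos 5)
Your argument is correct, and it reaches the conclusion by a genuinely different route than the paper. The paper's proof works coordinate by coordinate: it first verifies conditions (1) and (2) of Lemma \ref{lem:MoranTench*} inside each cyclic subgroup $\langle x_i\rangle$ of order $4$ (a finite check of nine triples), which handles the interaction between the $k_{1,i}$ and $k_{2,i}$ coordinates, and then separately computes the exponent of each commutator $[x_j,x_i]$ on both sides of (1) and (2), finding $k_{3,i,j}+\ell_{3,i,j}+(k_{1,i}+\ell_{1,i})m_{1,j}$ in each case. You instead package the square layer and the commutator layer into a single collection cocycle $\gamma(c,a)=\prod_i(x_i^2)^{c_ia_i}\prod_{j<k}[x_k,x_j]^{c_ka_j}$ satisfying $u(c)u(a)=u(ca)\gamma(c,a)$, observe that the first layer is handled by the homomorphism $G\to G/\Phi$, and reduce both conditions to the $\F_2$-biadditivity of $\gamma$, which is immediate since every $x_i^2$ and $[x_k,x_j]$ has order dividing $2$ (Lemma \ref{lem:nil2exp4}). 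Your version makes explicit the decoupling of layers that the paper's reduction step ("it suffices to check that the exponents of the nontrivial commutators coincide") leaves implicit, and it treats squares and commutators uniformly rather than by two different mechanisms; it also cleanly sidesteps the bookkeeping caused by the exclusions in \eqref{eq:repn} by proving the identity $\gamma(c,a*b)=\gamma(c,a)\gamma(c,b)$ in the central subgroup $\Phi$ before applying the coordinate functionals. The only caution is that both your proof and the paper's rely on the listed squares and commutators forming an $\F_2$-basis of $\Phi$, which is exactly the uniqueness hypothesis built into \eqref{eq:repn}, so you are on the same footing as the authors there.
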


\begin{proof}
 First, it is clear from the definition of $*$ that $(G,*)$ is elementary abelian.  We will check conditions (1) and (2) of Lemma \ref{lem:MoranTench*} in two steps.  To start, let $x_i$ have order $4$.  We will show that the elements in $\langle x_i \rangle$ satisfy conditions (1) and (2) of Lemma \ref{lem:MoranTench*}.  First, the equations will always hold if any of $a,b,$ or $c$ is $1$ or if $a = b$.  Second, since $\langle x_i \rangle$ is abelian, equation (1) holds if and only if equation (2) holds, and $c(a*b) = c(b*a)$, $(ca)*(cb) = (cb)*(ca)$.  This leaves exactly nine possibilities for $(a,b,c)$ to check: $(x_i, x_i^2, x_i)$, $(x_i, x_i^2, x_i^2)$, $(x_i, x_i^2, x_i(x_i)^2)$, $(x_i, x_i(x_i)^2, x_i)$, $(x_i, x_i(x_i)^2, x_i^2)$, $(x_i, x_i(x_i)^2, x_i(x_i)^2)$, $(x_i^2, x_i(x_i)^2, x_i)$, $(x_i^2, x_i(x_i)^2, x_i^2)$, and $(x_i^2, x_i(x_i)^2, x_i(x_i)^2)$.  These calculations are routine and left to the reader.

 Next, since (1) and (2) of Lemma \ref{lem:MoranTench*} hold for each $\langle x_i \rangle$ of order $4$, each $x_i$ that does not have order $4$ has order $2$, and all squares of elements are central (Lemma \ref{lem:nil2exp4}(ii)). Hence, to verify (1) and (2) of Lemma \ref{lem:MoranTench*}, it suffices to check that the exponents of the nontrivial commutators coincide.  Let $a = (k_{1,i}; k_{2,i}; k_{3,i,j})$, $b = (\ell_{1,i}; \ell_{2,i}; \ell_{3,i,j})$, and $c = (m_{1,i}; m_{2,i}; m_{3,i,j})$.  For (1), it is routine to verify that the exponent of the commutator $[x_j, x_i]$ for each of $(c(a*b))*c$ and $(ca)*(cb)$ is $k_{3,i,j} + \ell_{3,i,j} + (k_{1,i} + \ell_{1,i})m_{1,j}$. A similar calculation shows that the exponent of the commutator $[x_j, x_i]$ for each of $((a*b)*c)*c$ and $(ab)*(bc)$ is $k_{3,i,j} + \ell_{3,i,j} + (k_{1,j} + \ell_{1,j})m_{1,i}$.  Therefore, each of conditions (1) and (2) of Lemma \ref{lem:MoranTench*} hold for $G$ with the operation $*$ for all $a,b,c \in G$.
\end{proof}

Throughout the remainder of this section, $G$ will be a group of order $2^n$ and exponent 4. For an index set $M \subseteq \{1, \ldots, n\}$ and a collection $\{g_1, \ldots, g_n\}$ of indexed group elements, the notation $\prod_{m \in M} g_m$ means the product in the lexicographic order of $M$. That is, if $M = \{m_1, \dots, m_k\}$ where $m_1 < m_2 < \dots < m_k$, then $\prod_{m \in M} g_m = \prod_{i = 1}^k g_{m_i}$. Likewise, for a binary operation $*$ on $G$, let $\bigast_{m \in M} g_m$ denote the $*$ product of all $g_m$ for $m \in M$. As usual, $[b,a] = b^{-1}a^{-1}ba$ is the commutator of $a, b \in G$, and for three elements $a, b, c \in G$, their triple commutator is $[a, b, c] := [[a, b], c]$.

Next, we describe how we will construct a $*$ product on $G$. Assume $X=\{x_1, \ldots, x_n\}$ is a generating set for $G$ such that every element $g \in G$ can be expressed uniquely as
\begin{equation*}
g = \prod_{i=1}^n x_i^{\ep_i}
\end{equation*}
with each $\ep_i \in \{0, 1\}$. We define $*$ on $G$ by the following: if $g = \prod_{1=1}^n x_i^{e_i}$ and $h = \prod_{i=1}^n x_i^{f_i}$, then
\begin{equation}\label{eq:*op}
g \ast h := \prod_{i=1}^n x_i^{e_i + f_i \text{ mod 2}}.
\end{equation}
Under this operation, $G$ becomes an elementary abelian 2-group. It remains to determine conditions on $X$ for which $*$ satisfies parts (1) and (2) of Lemma \ref{lem:MoranTench*}.  We recall Definition \ref{def:goodseq}: a \textit{good generating sequence} $X = (x_1,\dots, x_n)$ is a generating set for $G$ such that every element $g \in G$ can be written uniquely as
\[ g = \prod_{i=1}^n x_i^{\ep_i},\]
where each $\ep_i \in \{0,1\}$, and the sequence further satisfies:
\begin{itemize}
    \item[(1)] for $i < j$, $[x_j, x_i] \in X \cup \{1\}$ and, if $[x_j, x_i] \notin Z(G)$, then:
        \begin{itemize}
            \item[(i)] $[x_j, x_i] = x_m$ for some $m \in \{1,\dots, n\}$ such that $i < m < j$;
            \item[(ii)] if $x_i^2 \notin Z(G)$, $x_i^2 = \prod_{m \in M} x_m$ for some $M \subseteq \{1,\ldots,n\}$, and $x_\ell \notin Z(G)$ for some $\ell \in M$, then $\ell < j$;
            \item[(iii)] if $x_j^2 \notin Z(G)$, $x_j^2 = \prod_{m \in M} x_m$ for some $M \subseteq \{1,\ldots,n\}$, and $x_\ell \notin Z(G)$ for some $\ell \in M$, then $i < \ell$;
        \end{itemize}
    \item[(2)] for $i < j < k$, if $[x_k, x_i, x_j] = 1$.  
\end{itemize}
Note that condition (2) implies that $[x_j, x_i, x_\ell] = 1$ for all $i < \ell < j$, so if $[x_j, x_i] = x_m$, then $[x_m, x_\ell] = 1$ for all $i < \ell < j$.  

For each $g \in G$, we refer to the unique expression $g = \prod_{i = 1}^t x_{k_i}$ as the \textit{normal form for $g$} (in terms of $X$).  So, conditions (1)(ii) and (iii) involve terms in the normal forms for $x_i^2$ and $x_j^2$, respectively.

We begin with a lemma regarding central elements of order $2$ and the $*$ product.

\begin{Lem}\label{lem:central}
Let $G$ be a group with good generating sequence $X = (x_1, \dots, x_n)$.  If $x_\ell \in Z(G)$ and $x_\ell^2 = 1$, then $g*x_\ell = gx_\ell$ for all $g \in G$.  In particular, if $G$ has exponent $4$ and nilpotency class $3$, then all triple commutators $[x_i, x_j, x_k] = x_\ell$ satisfy $gx_\ell = g*x_\ell$ for all $g \in G$.
\end{Lem}

\begin{proof}
 Let $g \in G$.  Since $X$ is a good generating sequence, we can express $g$ uniquely as 
 \[g = \prod_{m \in M} x_m.\]
 Let $x_\ell \in X$ be a central element of order $2$.  If $\ell \in M$, then
 \[ gx_\ell = \left(\prod_{m\in M, m < \ell}x_m\right) x_\ell^2 \left(\prod_{m \in M, m > \ell} x_m\right) = \prod_{m \in M, m \neq \ell} x_m = g*x_\ell,\]
 and, if $\ell \notin M$, then
 \[ gx_\ell = \prod_{m \in M \cup \{\ell\}} x_m = g*x_\ell.\]
Finally, in a group of exponent $4$ and nilpotency class $3$, all commutators have order $2$ and all triple commutators are central. 
\end{proof}

\begin{Lem}\label{lem:rewriting} 
Let $G$ be a group of exponent $4$ and nilpotency class $3$, and that has a good generating sequence $X = (x_1, \ldots, x_n)$.  For all $I \subseteq \{1, \dots, n\}$, we have:
 \begin{itemize}
  \item[(i)] $x_k \left(\prod_{i \in I}\limits x_i \right) = \left(\bigast_{i \in I \backslash \{k\}} x_kx_i\right) * x_k^{1 + \chi_{k \in I}} *x_k^{|I\backslash\{k\}| \bmod 2}$,
 \item[(ii)] $\left(\prod_{i \in I}\limits x_i \right)x_k = \left(\bigast_{i \in I \backslash \{k\}} x_ix_k\right) * x_k^{1 + \chi_{k \in I}} *x_k^{|I\backslash\{k\}| \bmod 2}$,
 \end{itemize}
where \[\chi_{k \in I} = 
\begin{cases} 0, \text{ if } k \notin I\\
1, \text{ if } k \in I
\end{cases}.
\]
\end{Lem}

\begin{proof}
We will prove (i). The proof of (ii) is similar. Let $x_{i,j} := [x_i, x_j]$ and $x_{i,j,k} := [x_i, x_j, x_k]$; each $x_{i,j}$ and $x_{i,j,k}$ is either an element of $X$ or the identity. With this notation, we have $x_k x_i = x_i x_{k,i} x_k x_{k, i, k}$ for all $i$ and $k$.

Since $G$ has nilpotency class $3$, all commutators have order $2$ and all triple commutators are central. Because each $x_{k,i,k}$ is central, we have
\begin{equation}\label{eq1}
x_k\left(\prod_{i \in I, i < k} x_i \right) = \left(\prod_{i \in I, i < k} x_i x_{k,i} x_{k,i,k} \right) x_k.
\end{equation}
This expression is almost in normal form, and we claim that no new terms are created by writing it in its normal form. Indeed, each triple commutator has order 2, so Lemma \ref{lem:central} shows that these terms can be placed anywhere without altering the $*$ product. Moreover, condition (1)(i) and (2) of Definition \ref{def:goodseq} guarantee that $x_{k,i}$ commutes with all $x_j$ such that $i < j < k$, so any $x_{k,i}$ can be moved to to put the expression into normal form without creating any new terms. Thus,
\begin{equation}\label{eq2}
\left(\prod_{i \in I, i < k} x_i x_{k,i} x_{k,i,k}\right)x_k = \left(\bigast_{i \in I, i < k} x_i x_{k,i} x_{k,i,k}\right)\ast x_k.
\end{equation}

Next, consider the product of the $x_i$ with $i \geq k$. By equation \eqref{eq1}, 
\begin{equation*}
x_k\left(\prod_{i \in I} x_i \right) = \left(\prod_{i \in I, i < k} x_i x_{k,i} x_{k,i,k} \right) x_k^{1 + \chi_{k \in I}}\left(\prod_{i \in I, i > k}\limits x_i \right).
\end{equation*}
The product $\prod_{i \in I, i > k} x_i$ is already in normal form.  Conditions (1)(ii) and (1)(iii) of Definition \ref{def:goodseq} guarantee that if $x_k^{1 + \chi_{k \in I}} = x_k^2$, then any terms involved in the normal form for $x_k^2$ can be moved to put the full expression into normal form without creating any new terms. Thus, by equation \eqref{eq2} (and keeping in mind that Lemma \ref{lem:central} applies to any central commutators), we obtain
\begin{equation*}
\left(\prod_{i \in I, i < k} x_i x_{k,i} x_{k,i,k} \right) x_k^{1 + \chi_{k \in I}}\left(\prod_{i \in I, i > k}\limits x_i \right) = \left(\bigast_{i \in I, i < k} x_i x_{k,i} x_{k,i,k} \right) \ast x_k^{1 + \chi_{k \in I}} \ast \left(\bigast_{i \in I, i > k}\limits x_i \right).
\end{equation*}
From here, we will insert additional copies of $x_k$ into the $*$ product and exploit the fact that $*$ is elementary abelian to get:
\begin{align*}
\left(\bigast_{i \in I, i < k} x_i x_{k,i} x_{k,i,k} \right) &\ast x_k^{1 + \chi_{k \in I}} \ast \left(\bigast_{i \in I, i > k}\limits x_i \right)\\
&= \left(\bigast_{i \in I, i < k}\limits x_i x_{k,i} x_{k,i,k}x_k\right)*x_k^{1 + \chi_{k \in I}}*\left(\bigast_{i \in I, i > k}\limits x_kx_i \right)*\bigast_{i \in I \backslash \{k\}} x_k\\
&= \left(\bigast_{i \in I, i < k}\limits x_kx_i\right)*x_k^{1 + \chi_{k \in I}}*\left(\bigast_{i \in I, i > k}\limits x_kx_i \right)*\bigast_{i \in I\backslash\{k\}} x_k\\
&= \left(\bigast_{i \in I \backslash \{k\}} x_kx_i\right) * x_k^{1 + \chi_{k \in I}} *x_k^{|I\backslash\{k\}| \bmod 2}.
\end{align*}
\end{proof}

Let $G$ have a good generating sequence $X$, and let $g \in G$ have normal form $g = \prod_{m \in M} x_m$, so $g$ is uniquely identified by $M \subseteq \{1, \dots, n\}$.  Define $\max_X(g) := \max\{m \in M\}$ (with the convention that $\max(1) = 0$), $\min_X(g) : = \min\{m \in M\}$,  and $L_X(g) := |M|$.  When there is no confusion as to what the good generating sequence is, we write $\max(g), \min(g)$, and $L(g)$, respectively.

\begin{Prop}\label{prop:main}
Let $G$ be a group of exponent $4$ and nilpotency class at most $3$, and that has a good generating sequence $X = (x_1, \dots, x_n)$.  Then, the operation $*$ defined in Equation \ref{eq:*op} makes $G$ into an elementary abelian $2$-group, and $*$ satisfies equations (1) and (2) of Lemma \ref{lem:MoranTench*} for all $a,b,c \in G$. 
\end{Prop}

\begin{proof}
We will prove that all $a,b,c \in G$ satisfy the conditions (1) of Lemma \ref{lem:MoranTench*}. The proof for condition (2) is similar.

We proceed by induction on $L(c)$. The result is trivial when $L(c) = 0$, i.e., when $c = 1$. Consider first the case when $L(c) = 1$, i.e., $c = x_k$.  Let $a = \prod_{m \in A} x_m$ and $b = \prod_{m \in B} x_m$, and, for sets $S$ and $T$, let $S \ominus T$ denote the symmetric difference of the two sets.  Then, 
\begin{equation*}
(c(a*b))*c = \left(x_k\left(\left(\prod_{m \in A} x_m\right)*\left(\prod_{m \in B} x_m\right)\right)\right)*x_k = \left(x_k\left(\prod_{m \in A \ominus B} x_m\right)\right)*x_k.
\end{equation*}
Applying Lemma \ref{lem:rewriting} with $I = A \ominus B$ yields
\begin{equation*}
\left(x_k\left(\prod_{m \in A \ominus B} x_m\right)\right)*x_k = \left(\bigast_{m \in (A \ominus B)\backslash \{k\}} x_kx_m\right) * x_k^{1 + \chi_{k \in A \ominus B}} *x_k^{|(A \ominus B)\backslash\{k\}| \bmod 2}*x_k.
\end{equation*}
Since $x_k^{1 +\chi_{k \in A \ominus B}}*x_k = x^{1 +\chi_{k \in A}} * x_k^{1 + \chi_{k \in  B}}$ and $|(A \ominus B) \backslash \{k\}| \bmod 2 = (|A \backslash \{k\}| + |B \backslash \{k\}| ) \bmod 2$, the last expression above is equal to
\begin{equation*}\label{eq:thm}
\left(\left(\bigast_{m \in A \backslash \{k\}} x_kx_m\right) * x_k^{1 + \chi_{k \in A}} *x_k^{|A\backslash\{k\}| \bmod 2}\right)*\left(\left(\bigast_{m \in B \backslash \{k\}} x_kx_m\right) * x_k^{1 + \chi_{k \in B}} *x_k^{|B\backslash\{k\}| \bmod 2} \right).
\end{equation*}
Using Lemma \ref{lem:rewriting} again produces
\begin{equation*}
\left(x_k\left(\prod_{m \in A} x_m\right)\right)*\left(x_k\left(\prod_{m \in B} x_m\right)\right),
\end{equation*}
which is equal to $(ca)*(cb)$. So, the result holds when $L(c)=1$.

Next, assume that $L(c) \ge 1$ and $(g(a*b)*g) = (ga)*(gb)$ holds for all $g, a, b \in G$ with $L(g) < L(c)$. Let $c = x_ud = x_u*d = ex_v = e*x_v$, where $u = \min(c)$ and $v = \max(c)$.  Again, let $a = \prod_{m \in A} x_m$ and $b = \prod_{m \in B} x_m$. Then,
\begin{align*}
(c(a*b))*c &= (x_u(d(a*b)))*(x_ud)\\
&= (x_u((d(a*b))*d))*x_u, \quad\text{ since $L(x_u)=1$}\\
&= (x_u((da)*(db)))*x_u, \quad\text{ since $L(d) < L(c)$}\\
&= (x_u(da))*(x_u(db)), \quad\text{ since $L(x_u)=1$}\\
&= (ca)*(cb),
\end{align*}
as desired.
\end{proof}

Computationally, all groups of exponent $4$, nilpotency class $3$, and order at most $64$ have a good generating sequence; $35$ out of $37$ groups of exponent $4$, nilpotency class $3$, and order $128$ have a good generating sequence (that is, all except \verb|SmallGroup(128,764)| and \verb|SmallGroup(128,836)|); $294$ out of $322$ groups of exponent $4$, nilpotency class $3$, and order $256$ have a good generating sequence; and $4046$ out of $4813$ groups of exponent $4$, nilpotency class $3$, and order $512$ have a good generating sequence.  

Propositions \ref{prop:nil2exp4*} and \ref{prop:main} prove that every group of exponent $4$ that either has nilpotency class at most $2$ or has nilpotency class $3$ and a good generating sequence has a normal complement in its mod $2$ envelope; again, we refer the curious reader to \cite{Ivory, Johnson, MoranTench} for more details.  We note that some of these normal subgroups appear to have been discovered previously (see \cite{PassiSehgal, Sandling}); however, as we see later in Example \ref{Norm comp, but not realizable}, a $2$-group having a normal complement in its mod $2$ envelope is not a sufficient condition for realizability in characteristic $2$. 

We can now prove Theorem \ref{thm:exp4nil2}, which shows that any 2-group of exponent 4 that either has nilpotency class $2$ or has nilpotency class $3$ and a good generating sequence is realizable in characteristic 2.

\begin{proof}[Proof of Theorem \ref{thm:exp4nil2}]
 By Propositions \ref{prop:nil2exp4*} and \ref{prop:main}, $G$ satisfies the hypotheses of Lemma \ref{lem:MoranTench*} with the binary operation $*$ as defined in either Equation \ref{eq:*} or \ref{eq:*op}.  By Lemma \ref{lem:MoranTench*}, the subgroup
 \[N := \left\{\sum_{g \in S} g  : S \subseteq G, |S| \text{ odd}, \sideset{}{^*}\sum_{g \in S} g = 1\right\}\]
 is normal in $\left(\Z_2[G]\right)^\times$ and $\left(\Z_2[G]\right)^\times/N \cong G$.  In order to show that $G$ is realizable, it suffices to show that $I := 1 + N$ is an ideal of $\Z_2[G]$.  Indeed, we have 
 \[ I = \left\{ \sum_{g \in S} g  : S \subseteq G, |S| \text{ even}, \sideset{}{^*}\sum_{g \in S} g = 1\right\}.\]  Since $*$ is an associative, commutative binary operation, it is clear that $I$ is closed under addition.  Moreover, using (1) and (2) of Lemma \ref{lem:MoranTench*} and proceeding by induction on $n$, if $g \in G$ and each $g_i \in G$, then
 \[\sideset{}{^*}\sum_{i=1}^n (g_ig) = \left(\sideset{}{^*}\sum_{i=1}^n g_i \right) * \left( \sideset{}{^*}\sum_{i= 1}^n g \right)\]
 and
 \[\sideset{}{^*}\sum_{i=1}^n (gg_i) = \left(\sideset{}{^*}\sum_{i=1}^n  g \right) * \left( \sideset{}{^*}\sum_{i= 1}^n g_i \right).\]  When $n$ is even and $x = \sum_{i=1}^n g_i \in I$, this shows that both $gx, xg \in I$.  Since $I$ is closed under addition, this shows that $I$ is an ideal, and, therefore, the group of units of $\Z_2[G]/I$ is isomorphic to $G$ by Lemma \ref{Norm comp lemma}(2).
\end{proof}

As was mentioned in the introduction, it is possible to show that many groups of order $2^n$ have exponent $4$ and nilpotency class $2$, and we will expand upon that further here.  Suppose that there are $2^{A(n) \cdot n^3}$ groups of order $2^n$.  Higman \cite{Higman} showed that $A(n) \ges 2/27 + O(n^{-1})$ by considering only groups of exponent $4$ and nilpotency class $2$.  On the other hand, Sims \cite{Sims} proved that $A(n) \les 2/27 + O(n^{-1/3})$, which indeed proves that 
\[ \frac{\log(\# \text{ of groups of order }2^n \text{ with exponent }4 \text{ and nilpotency class }2)}{\log(\# \text{ of groups of order } 2^n)} \to 1 \]
as $n \to \infty$.  It seems very likely that a large ratio of groups of order (at most) $2^n$ have exponent $4$ and nilpotency class $2$, perhaps even almost all as $n \to \infty$.  At any rate, our results prove that at least $2^{2n^3/27 + O(n^2)}$ out of the groups of order $2^n$ are realizable as groups of units of finite rings.

We end this section by noting that the condition that $G$ have exponent $4$ is necessary to the proof of Theorem \ref{thm:exp4nil2}. (It is in fact necessary to the statement as well; see Example \ref{Norm comp, but not realizable}.) To see what goes wrong with exponent at least $8$, suppose $x_1$ is a generator of $G$ of order $8$ and both $x_1^2$ and $x_1^4$ also appear in the list of generators.  In this case, any element of $\langle x_1 \rangle$ can be written in the form $x_1^{n_1}(x_1^2)^{n_2}(x_1^4)^{n_3}$, where each $n_i$ is either $0$ or $1$. Let $a = x_1$, $b = x_1(x_1^2)$, and $c = x_1$.  Then,
\[ (c(a*b))*c = \left(x_1\left(x_1*x_1(x_1^2)\right)\right)*x_1 = x_1^2,\]
whereas
\[(ca)*(cb) = (x_1^2)*(x_1^4) = (x_1^2)(x_1^4),\]
showing the necessity of exponent $4$ to the proofs of Propositions \ref{prop:nil2exp4*} and \ref{prop:main} and hence (more generally) to the proof of Theorem \ref{thm:exp4nil2}.  


\section{Not all 2-groups of exponent 4 are realizable in characteristic 2}
\label{sect:counterexample}

Here, we demonstrate that SmallGroup(64,34) in GAP \cite{GAP}, which has presentation
\begin{equation*}
\langle x_1, x_2 : x_1^4=x_2^2 = (x_1x_2)^4= (x_1^2x_2)^4 = 1 \rangle,
\end{equation*}
has exponent $4$, nilpotency class $2$, and is not realizable in characteristic 2.  The authors would like to thank Leo Margolis for calling our attention to this example. We remark that the strategy used below will also show that SmallGroup(64,35) (which also has exponent 4 and nilpotency class 4) is not realizable in characteristic 2.

Our technique is largely computational, and relies on calculations performed in GAP. Let $G = \text{SmallGroup(64,34)}$, $R = \Z_2[G]$, and suppose $I$ is an ideal of $R$ such that $(R/I)^\times \cong G$. We argue that such an $I$ cannot exist. To do this, we produce lists of elements of $R$ that must lie in such an $I$. This yields a small number of possible ideals $I$ and residue rings $R/I$ that are then analyzed in more detail.

The steps for our procedure are given below. We say that an element $x \in G$ \textit{survives} if its image in $R/I$ is nontrivial. In other words, $x$ survives when $1+x \notin I$. 

\begin{Lem}\label{lem:64}
Let $x \in G$ be such that $|x|=4$ and suppose every nontrivial element of $C_G(x)$ survives. Then, there exists $y \in C_G(x) \setminus \{1, x, x^2\}$ such that $y^2=x^2$ and $1+x+x^2+y \in I$.
\end{Lem}
\begin{proof}
Use a bar to denote passage from $R$ to $R/I$. The element $1+x+x^2$ is a unit of $R$, so $\overline{1+x+x^2}$ is a unit of $R/I$. Moreover, $1+x+x^2$ commutes with $x$, and $(1+x+x^2)^2=x^2$. Since the nontrivial elements of $C_G(x)$ survive, we conclude that $\overline{1+x+x^2} = \overline{y}$ for some $y \in C_G(x)$ with $y^2=x^2$. Finally, if $y \in \{1, x, x^2\}$, then either $x=1$ or $x^2=1$ in $R/I$. This contradicts that assumption that both $x$ and $x^2$ survive, so $y \notin \{1, x, x^2\}$.
\end{proof}

Next, we initialize this setting in GAP by running the following code snippet:
\begin{verbatim}
R:=GroupRing(GF(2), SmallGroup(64,34));; genR:=GeneratorsOfAlgebra(R);; 
one:=genR[1];; f1:=genR[2];; f2:=genR[3];; f3:=genR[4];; 
f4:=genR[5];; f5:=genR[6];; f6:=genR[7];; 
G:=Group([f1,f2,f3,f4,f5,f6]);;
\end{verbatim}
In our subsequent work, when we refer to \texttt{one}, \texttt{f1}, $\ldots$, \texttt{f6}, these are specific elements of $R$ as set up in the above code snippet.

\subsection*{Step 1: Show that every nontrivial element of $G$ survives.}

For each $x \in G$, we compute the ideal $I':= \langle 1+x\rangle \subseteq R$, and find $|R/I'|$. If $|R/I'| \le 64$, then $R/I'$ cannot realize $G$, so $1+x$ cannot be part of $I$. Below are the elements of $G$ for which $|R/I'| \ge 128$, along with the size of $|R/I'|$.

\begin{table}[h]
\begin{minipage}{0.33\linewidth}\centering
\begin{tabular}{l|c}
Element $x$ & $|R/\langle 1 + x \rangle|$\\
\hline
\texttt{f3} & 256\\
\texttt{f4} & 256\\
\texttt{f5} & 65536\\
\texttt{f6} & 4294967296\\
\texttt{f3*f4} & 256
\end{tabular}
\end{minipage}%
\begin{minipage}{0.33\linewidth}\centering
\begin{tabular}{l|c}
Element $x$ & $|R/\langle 1 + x \rangle|$\\
\hline
\texttt{f3*f5} & 256\\
\texttt{f3*f6} & 256\\
\texttt{f4*f5} & 256\\
\texttt{f4*f6} & 256\\
\texttt{f5*f6} & 65536
\end{tabular}
\end{minipage}%
\begin{minipage}{0.33\linewidth}\centering
\begin{tabular}{l|c}
Element $x$ & $|R/\langle 1 + x \rangle|$\\
\hline
\texttt{f3*f4*f5} & 256\\
\texttt{f3*f4*f6} & 256\\
\texttt{f3*f5*f6} & 256\\
\texttt{f4*f5*f6} & 256\\
\texttt{f3*f4*f5*f6} & 256
\end{tabular}
\end{minipage}
\caption{Some larger residue rings of $R$}
\label{1+x table}
\end{table}

For each $x$ in Table \ref{1+x table} such that $|R/\langle 1+x \rangle|=256$, we compute the nilpotency class of $(R/\langle 1 + x \rangle)^\times$. In each case, the nilpotency class is 1 or 2. Since $G$ has nilpotency class 4, we conclude that $G$ cannot be realized by a residue ring of any such $R/\langle 1 + x \rangle$. Similarly, one may calculate that $(R/\langle 1+x \rangle)^\times$ has nilpotency class 2 when $x=\texttt{f5}$ or $x=\texttt{f5*f6}$. Thus, every such $x$ survives, and it remains to show that \f{} survives.

Let $G_0 = G/\langle \f{} \rangle$, which is isomorphic to SmallGroup(32,6) in GAP. Then, 
$R_0 := R/\langle 1 + \f{} \rangle$ is isomorphic to $\Z_2[G_0]$. Suppose that $R_0$ contains a two-sided ideal $I_0$ such that $(R_0/I_0)^\times \cong G$. We will apply Lemma \ref{lem:64} to $R_0$ and $I_0$.

The group $G_0$ contains twenty elements of order 4, all of which survive in $R_0/I_0$ by our work above. By Lemma \ref{lem:64}, for each such $x \in G_0$ there exists $y \in C_{G_0}(x) \setminus \{1, x, x^2\}$ such that $y^2=x^2$ and $1+x+x^2+y \in I_0$. For each $x$, there are three choices for such a $y$, so in total there are sixty sums $1+x+x^2+y$ to consider. For each sum $1+x+x^2+y$, we form the principal ideal $\langle 1+x+x^2+y \rangle \subseteq R_0$ and then the residue ring $R_0/\langle 1+x+x^2+y \rangle$. We will show that $G$ cannot be realized as the unit group of a residue ring of any $R_0/\langle 1+x+x^2+y \rangle$.

A priori, we have sixty residue rings $R_0/\langle 1+x+x^2+y \rangle$ to consider. However, there are many instances where two different sums $1+x_1+x_1^2+y_1$ and $1+x_2+x_2^2+y_2$ generate the same ideal. After deleting duplicate ideals, we are left with only eight distinct ideals generated by a sum $1+x+x^2+y$.

Among these eight ideals, five produce residue rings of order at most 4096. For these five rings, the unit group of the residue ring and its nilpotency class can be computed in GAP. In each case, the nilpotency class is less than 4. The remaining three ideals yield residue rings of orders 65536, 65536, and 1048576. It may be possible to analyze the corresponding unit groups directly; however, this could be time-consuming. A more efficient method is to examine the corresponding ideals. For each of the three remaining ideals, there is at least one element $x$ such that the ideal does not contain a sum $1+x+x^2+y$. After including such a sum in the ideal, the corresponding residue ring became small enough that the nilpotency class of its unit group could be computed in GAP. In every case, the nilpotency class was at most 3. 

Putting everything together, we conclude that if $1+\f{} \in I$, then no residue ring of $R/I$ can realize $G$. Therefore, \f{} must survive.

\subsection*{Step 2: Focus on elements of order 4 and use Lemma \ref{lem:64}.}

The group $G$ contains 32 elements $x$ such that $|x|=4$ and $x^2 \notin \langle \texttt{f5}, \f \rangle$. For each $x$, the centralizer of $x$ in $G$ is
\begin{equation*}
C_G(x) = \{1, \,x, \,x^2, \,x^3, \,\f, \,x\f, \,x^2\f, \,x^3\f\},
\end{equation*}
and is isomorphic to $C_4 \times C_2$. By Step 1, every nontrivial element of $C_G(x)$ survives, so we may apply Lemma \ref{lem:64} to $x$. There are three possibilities for the element $y$ that satisfies the conclusion of Lemma \ref{lem:64}: $x^3, \; x^3\f, \; x\f$. Consequently, for each of the 32 elements $x$, one of the following three sums must be in $I$:
\begin{equation*}
1+x+x^2+x^3, \quad 1+x+x^2+x^3\f, \quad \text{ or } 1+x+x^2+x\f.
\end{equation*}

At this point, we have 32 elements $x \in G$ we are considering. However, note that if $1+x+x^2+g \in I$ for some $g \in G$, then by multiplying by $x^2$ we get $x^2+x^3+1+x^2g = 1 + x^3 + (x^3)^2 + x^2g \in I$. So, if know which sum $1+x+x^2+g$ is in $I$ for a given $x$, then we also know which sum is in $I$ for $x^3$. This reduces the number of possible $x$'s from 32 to 16.

We may use the following 16 elements of $G$ as the possibilities for $x$:
\begin{verbatim}
f1,  f1*f2,  f1*f3,  f1*f5,  f1*f6,  f1*f2*f3,  f1*f2*f4,
f1*f2*f5,  f1*f2*f6,  f1*f3*f4,  f1*f3*f6,  f1*f4*f5, 
f1*f2*f3*f6,  f1*f2*f4*f6,  f1*f2*f5*f6,  f1*f3*f4*f6
\end{verbatim}
For each of these sixteen $x$'s, consider the ideal $I'=\langle 1+x+x^2+x\f \rangle \subseteq R$. GAP confirms that $|R/I'|=256$, and that the unit group of $R/I'$ has nilpotency class 2. We conclude that for each $x$, $1+x+x^2+x\f$ cannot be in $I$.

\subsection*{Step 3: Eliminating the remaining cases}

At this point, we have the sixteen possibilities for $x$, and for each one $I$ must contain either $1+x+x^2+x^3$ or $1+x+x^2+x^3\f$. A priori, this gives us $2^{16}$ possible choices for $I$. However, we can further reduce the problem to eight cases, of which only two require a detailed analysis. Let $x_1 = \texttt{f1}$, $\quad x_2 = \texttt{f1*f2}, \quad$ and $\quad x_3 = \texttt{f1*f3}$ which are the first three element in the above list of possibilities for $x$. We will form eight ideals $I'$ by picking one of the two possible sums for each $x_i$, and letting $I'$ be the ideal of $R$ generated by those three sums. The table below summarizes these cases, along with the order of $R/I'$.

\begin{table}[h]
\begin{minipage}{\linewidth}\centering
{\renewcommand{\arraystretch}{1.4}
\begin{tabular}{c|c|c|c|c}
Case & Sum for $x_1$ & Sum for $x_2$ & Sum for $x_3$ & $|R/I'|$\\
\hline
1 & \asum{1} & \asum{2} & \asum{3} & 4096\\
\hline
2 & \asum{1} & \asum{2} & \bsum{3} & 1024\\
\hline
3 & \asum{1} & \bsum{2} & \asum{3} & 2048\\
\hline
4 & \asum{1} & \bsum{2} & \bsum{3} & 1024\\
\hline
5 & \bsum{1} & \asum{2} & \asum{3} & 1024\\
\hline
6 & \bsum{1} & \asum{2} & \bsum{3} & 2048\\
\hline
7 & \bsum{1} & \bsum{2} & \asum{3} & 1024\\
\hline
8 & \bsum{1} & \bsum{2} & \bsum{3} & 2048\\
\end{tabular}}
\caption{The final eight cases.}
\label{I' table}
\end{minipage}
\end{table}

For $G$ to be realized by $R/I$, at least one of these eight ideals $I'$ must be contained in $I$. In other words, we must be able to find at least one residue ring of one $R/I'$ that realizes $G$. If that happens, then $G$ is a quotient group of the unit group of $R/I'$. We will argue that this does not occur, and therefore that $G$ is not realizable.

\subsubsection*{Cases 2, 4, 5, and 7} 

GAP indicates that Cases 2, 4, 5, and 7 always produce the same ideal $I'$. Furthermore, in those cases the unit group of $R/I'$ has nilpotency class 3. Hence, $G$ cannot be realized in these cases.

\subsubsection*{Cases 3, 6, and 8}

GAP confirms that in Cases 3, 6, and 8, the residue rings have isomorphic unit groups. Fix one of these Cases, and let $U$ be the unit group of $R/I'$, which has order 1024. Then, $U$ has nilpotency class 4, so we cannot rule out this Case as we did in the previous step. Nevertheless, a more detailed examination will show that $G$ is not a quotient group of $U$.

\begin{itemize}
\item Suppose $N$ is a normal subgroup of $U$ such that $U/N \cong G$.

\item \texttt{GAP} indicates that $Z(U)$ has order 8.

Since $U$ surjects onto $G$, the image of $Z(U)$ is contained in $Z(G)$.

However, $|Z(G)|=2$, so either $Z(U) \subseteq N$, or $|Z(U) \cap N|=4$.

\item If $Z(U)$ were contained in $N$, then the nilpotency class of $U/N$ would be 3.

This cannot occur, so $Z(U) \not\subseteq N$.

Hence, $|Z(U) \cap N|=4$.

\item \texttt{GAP} indicates that $Z(U)$ has seven subgroups of order 4.

Each such subgroup $H$ is normal in $U$ (since $H$ is central), so we can look at $U/H$.

If $G$ occurs as a quotient group of $U$, then it also occurs as a quotient group of $U/H$.

\item Each quotient group $U/H$ has order 256. \texttt{GAP} says that these quotient groups fall into three isomorphism classes:
\begin{center}
\begin{verbatim}
[256, 1529],  [256, 1530],  [256, 3953],  [256, 5696] 
\end{verbatim}
\end{center}
The first three groups all have nilpotency class 3, so they can be ruled out.

The group \texttt{[256, 5696]} has nilpotency class 4, but another \texttt{GAP} calculation verifies that $G$ does not occur as a quotient group of \texttt{[256, 5696]}.
\end{itemize}

\subsubsection*{The Final Case}

By now, only Case 1 remains. The strategy here is the same as for Cases 3, 6, and 8. However, because the unit group of $R/I'$ is larger, the calculations take longer, and we don't have access to the $\texttt{IdGroup}$ command in GAP to examine quotient groups of $U$.

\begin{itemize}
\item Let $U = (R/I')^\times$, which is a group of order 2048.

As before, assume that $G$ occurs as a quotient group of $U$, and let $N \trianglelefteq U$ be such that $U/N \cong G$.

\item The center $Z(U)$ of $U$ has order 8.

As before, we must have $|Z(U) \cap N|=4$.

\item The center contains seven subgroups $H$ of order 4.

For each of these, form the quotient group $U/H$, which has order 512.

One of these quotient groups has nilpotency class 3, so we can discard it.

\item For each remaining $U/H$, find all of its quotient groups of order 64.

Many groups will occur, but they always fall into three isomorphism classes:
\begin{center}
\begin{verbatim}
[ 64, 32 ],  [ 64, 60 ],  [64, 90]
\end{verbatim}
\end{center}

None of these classes is \texttt{[64, 34]}, so we conclude that $G$ cannot be realized in Case 1.
\end{itemize}


\section{Groups of large exponent}\label{Large exponent section}
Our goal in this section is to prove Theorems \ref{Exponent upper bound} and \ref{Exp at least 8 cor 1}, which allow us to conclude that some 2-groups are not realizable in characteristic $2^m$. We shall prove Theorem \ref{Exponent upper bound} first by a straightforward counting argument.

\begin{Lem}\label{Factorial lem}
Let $m \ges 1$. Then, for all $1 \les k \les 2^m-1$, the product $\binom{2^m}{k}\cdot 2^k$ is divisible by $2^{m+1}$.
\end{Lem}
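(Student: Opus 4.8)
The plan is to pass to the $2$-adic valuation $v_2$ and show that
\[ v_2\!\left(\binom{2^m}{k}\cdot 2^k\right) = v_2\!\left(\binom{2^m}{k}\right) + k \ges m+1 \]
for every $1 \les k \les 2^m-1$. The entire problem thus reduces to pinning down $v_2\!\left(\binom{2^m}{k}\right)$ exactly; once that is known, the inequality becomes almost immediate.

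To compute the valuation of the binomial coefficient, I would exploit the elementary identity
\[ k\binom{2^m}{k} = 2^m \binom{2^m-1}{k-1}, \]
together with the observation that $\binom{2^m-1}{k-1}$ is \emph{odd} for every $k$ in the stated range. The oddness follows from Lucas' theorem, or directly from the fact that $2^m-1$ is a string of $m$ ones in base $2$, so each binary digit of $k-1$ is dominated by the corresponding digit of $2^m-1$. Taking $v_2$ of both sides of the displayed identity then yields $v_2(k) + v_2\!\left(\binom{2^m}{k}\right) = m$, i.e. $v_2\!\left(\binom{2^m}{k}\right) = m - v_2(k)$.

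With this formula in hand, the desired divisibility amounts to $m - v_2(k) + k \ges m+1$, that is, $k - v_2(k) \ges 1$. Since $2^{v_2(k)}$ divides $k$ and $k \ges 1$, we have $k \ges 2^{v_2(k)}$; combining this with the universal bound $2^{j} \ges j+1$ (valid for all integers $j \ges 0$) gives $k \ges v_2(k)+1$, which is exactly what is needed. This completes the argument.

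The computation is short and I do not anticipate a genuine obstacle; the only point demanding care is the oddness of $\binom{2^m-1}{k-1}$, which is the linchpin that makes the valuation identity clean. An alternative route would be to apply Legendre's formula $v_2(n!) = n - s_2(n)$ to each factorial appearing in $\binom{2^m}{k}$, producing $v_2\!\left(\binom{2^m}{k}\right) = s_2(k) + s_2(2^m-k) - 1$; however, the factorization $k\binom{2^m}{k} = 2^m\binom{2^m-1}{k-1}$ is more direct and avoids any bookkeeping with digit sums, so I would favor it.
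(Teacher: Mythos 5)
Your proof is correct, but it follows a genuinely different route from the paper's. The paper applies Legendre's formula to $k!$ and uses only the crude bound $v_2(k!) \le k-1$, combined with the observation that the numerator of $\binom{2^m}{k}$ contributes a factor of $2^m$; this gives $v_2\bigl(\binom{2^m}{k}\cdot 2^k\bigr) \ge m-(k-1)+k = m+1$ directly, with no need to know the binomial coefficient's valuation exactly. You instead compute the exact value $v_2\bigl(\binom{2^m}{k}\bigr) = m - v_2(k)$ via the absorption identity $k\binom{2^m}{k}=2^m\binom{2^m-1}{k-1}$ and the oddness of $\binom{2^m-1}{k-1}$ (Lucas), and then finish with the elementary inequality $k \ge 2^{v_2(k)} \ge v_2(k)+1$. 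Your approach yields sharper information (the precise valuation, hence the precise power of $2$ dividing the product), at the cost of invoking Lucas' theorem; the paper's is slightly more self-contained, trading precision for a one-line estimate on $v_2(k!)$. Either argument suffices for the lemma, and the subsequent application (Lemma \ref{1+2t lem}) needs only the divisibility by $2^{m+1}$, so nothing downstream is affected by the choice.
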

\begin{proof}
Let $v_2$ be the 2-adic valuation, i.e.,\ for all positive integers $n$, $v_2(n)$ equals the exponent of the largest power of 2 that divides $n$. Let $\ell = \lfloor \log_2(k) \rfloor$. As is well-known, Legendre's formula states that
\begin{equation*}
v_2(k!) = \sum_{i=1}^{\ell} \left\lfloor \frac{k}{2^i} \right\rfloor.
\end{equation*}
Clearly,
\begin{equation*}
\sum_{i=1}^{\ell} \left\lfloor \frac{k}{2^i} \right\rfloor \les \sum_{i=1}^{\ell} \frac{k}{2^i} \les k \sum_{i=1}^{\ell} \frac{1}{2^i} < k,
\end{equation*}
and since $v_2(k!)$ is an integer, we have $v_2(k!) \les k-1$. Thus,
\begin{equation*}
v_2( \textstyle\binom{2^m}{k}\cdot 2^k) \ges v_2(2^m) - v_2(k!) + v_2(2^k) \ges m - (k-1) + k = m+1,
\end{equation*}
as desired.
\end{proof}

\begin{Lem}\label{1+2t lem}
Let $m \ges 1$ and let $t$ be an indeterminate. Then, $(1+2t)^{2^{m-1}}=1$ in the polynomial ring $\Z_{2^m}[t]$.
\end{Lem}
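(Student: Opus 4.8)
The plan is to expand $(1+2t)^{2^{m-1}}$ with the binomial theorem and show that every nonconstant coefficient vanishes modulo $2^m$. Writing
\[ (1+2t)^{2^{m-1}} = \sum_{k=0}^{2^{m-1}} \binom{2^{m-1}}{k} 2^k t^k, \]
the $k=0$ term is $1$, so it suffices to prove that $2^m$ divides $\binom{2^{m-1}}{k} 2^k$ for every $1 \les k \les 2^{m-1}$; each surviving coefficient is then $0$ in $\Z_{2^m}$, and the sum collapses to $1$.

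I would split the range of $k$ into two pieces. For $1 \les k \les 2^{m-1}-1$ (a nonempty range precisely when $m \ges 2$), I would apply Lemma \ref{Factorial lem} with $m$ replaced by $m-1$: it yields $2^{(m-1)+1} = 2^m \mid \binom{2^{m-1}}{k} 2^k$, which is exactly what is needed. The only coefficient left is the extreme one, $k = 2^{m-1}$, where $\binom{2^{m-1}}{2^{m-1}} 2^{2^{m-1}} = 2^{2^{m-1}}$; here $2^m \mid 2^{2^{m-1}}$ because $m \les 2^{m-1}$ for all $m \ges 1$. Note that this top term is the only nonconstant term when $m = 1$ (then $(1+2t)^1 = 1+2t$ and $2 \equiv 0$ in $\Z_2$), so the base case needs no separate treatment.

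The substance of the argument is entirely contained in Lemma \ref{Factorial lem}; the only points demanding care are the index shift $m \mapsto m-1$, which forces the small-$m$ endpoints to be handled as above, and the elementary inequality $m \les 2^{m-1}$ used for the extreme binomial term. I therefore expect no genuine obstacle. As a self-contained alternative that bypasses Lemma \ref{Factorial lem}, one can instead induct on $m$: the case $m=1$ is immediate, and if $(1+2t)^{2^{m-1}} = 1 + 2^m f(t)$ for some $f \in \Z[t]$, then squaring gives
\[ (1+2t)^{2^m} = (1+2^m f)^2 = 1 + 2^{m+1}\bigl(f + 2^{m-1} f^2\bigr), \]
which is congruent to $1$ modulo $2^{m+1}$ since $m \ges 1$ makes $2^{m-1}$ an integer. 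Either route establishes the claim.
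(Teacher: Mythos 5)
Your main argument is exactly the paper's (one-line) proof---binomial expansion plus Lemma \ref{Factorial lem} applied with $m-1$ in place of $m$---with the index shift, the endpoint $k=2^{m-1}$, and the $m=1$ case all handled correctly. The alternative induction on $m$ is a valid bonus, but the substance matches the paper's intended route.
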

\begin{proof}
Apply the Binomial Theorem and Lemma \ref{Factorial lem}.
\end{proof}

We are now able to prove Theorem \ref{Exponent upper bound}.

\begin{proof}[Proof of Theorem \ref{Exponent upper bound}]
Since $G$ is realizable in characteristic $2^m$, by Theorem \ref{R=2G thm} there is a residue ring $R$ of $\Z_{2^m}[G]$ such that $R^\times = G$, $R$ is local with maximal ideal $M$, and $M = 1+G$. 

The ideal $M$ is nilpotent; let $k$ be the smallest positive integer such that $M^k=\{0\}$. Then, for each $1 \les i \les k-1$, we have $M^i \supsetneqq M^{i+1}$. Since $|M|=|G|=2^n$, this implies that $k \les n+1$.

Next, fix $g \in G$. Then, $1+g \in M$, so $(1+g)^k = 0$; in fact, $(1+g)^\ell = 0$ for all $\ell \ges n+1$. Let $L=\lceil \log_2(n+1) \rceil$ as in the statement of the theorem. Then, $2^L$ is the smallest power of 2 greater than or equal to $n+1$, so $(1+g)^{2^L} = 0$. Note that $\binom{2^L}{i}$ is even for each $1 \les i \les 2^L-1$. Hence, applying the Binomial Theorem to $(1+g)^{2^L}$ shows that
\begin{equation*}
0 = (1+g)^{2^L} = 1 + g^{2^L} + 2\alpha
\end{equation*}
for some $\alpha \in R$. Rearranging this equation gives $g^{2^L} = -(1+2\alpha)$, and by Lemma \ref{1+2t lem},
\begin{equation*}
g^{2^{L+m-1}} = (g^{2^L})^{2^{m-1}} = (-1)^{2^{m-1}}(1+2\alpha)^{2^{m-1}} = 1.
\end{equation*}
Since $g \in G$ was arbitrary, we conclude that the exponent of $G$ is at most $2^{L+m-1}$.
\end{proof}

The proof of Theorem \ref{Exp at least 8 cor 1} is more complicated. We will first establish Theorem \ref{Higher exp thm}, which provides more restrictions on the exponents of 2-groups that are realizable in characteristic 2. One of these restrictions also holds in characteristic 4, and this is enough for us to prove Theorem \ref{Exp at least 8 cor 1}. 

We begin with a computational lemma.

\begin{Lem}\label{Polynomial lemma}
Let $t$ be an indeterminate, let $\PL \ges 2$, and let $k \ges 1$ be odd. Let $I$ be the ideal of the polynomial ring $\Z_2[t]$ generated by $1+t^{2^{\PL}}$ and $1+t+t^2+t^k$, and let $R=\Z_2[t]/I$. If $k \equiv 1 \mod 4$, then $t^2=1$ in $R$; and if $k \equiv 3 \mod 4$, then $t^4=1$ in $R$.
\end{Lem}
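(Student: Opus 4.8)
The plan is to work inside $R$ and to track everything through the single nilpotent element $s := 1+t$. Since $\rchar(R)=2$, the first generator of $I$ satisfies $1+t^{2^{N}} = (1+t)^{2^{N}} = s^{2^{N}}$ by the Frobenius identity, so $s^{2^{N}}=0$ in $R$; in particular $s$ is nilpotent. Because $R$ is commutative, $sy$ is then nilpotent for every $y \in R$, and hence any element of the form $1+sy$ is a unit of $R$. Since $t^2 = (1+s)^2 = 1+s^2$ and $t^4 = (1+s)^4 = 1+s^4$ in characteristic $2$, the two conclusions I am after are exactly $s^2=0$ (when $k\equiv 1 \bmod 4$) and $s^4 = 0$ (when $k \equiv 3 \bmod 4$). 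Thus the whole lemma reduces to controlling the nilpotency order of $s$.

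Next I would rewrite the second relation. Working in characteristic $2$ and using that $k$ is odd, I group
\[ 0 = 1+t+t^2+t^k = (1+t^2) + (t+t^k) = (1+t)^2 + t\,(1+t^{k-1}). \]
Writing $k-1 = 2j$ with $j = (k-1)/2$, the Frobenius identity gives $1+t^{k-1} = (1+t^{j})^2$, and the telescoping factorization $1+t^{j} = (1+t)(1+t+\cdots+t^{j-1})$ lets me write $1+t^{j} = s\,P$, where $P := \sum_{i=0}^{j-1} t^i$. Substituting and factoring out $s^2$ yields the single key identity
\[ s^2\,(1 + t P^2) = 0 \qquad \text{in } R. \]
From here the argument splits according to the parity of $j$, which is exactly what $k \bmod 4$ records: $k \equiv 1 \bmod 4$ corresponds to $j$ even, and $k \equiv 3 \bmod 4$ to $j$ odd. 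The deciding tool is the factor theorem over $\F_2$: the polynomial $s = 1+t$ divides $g(t) \in \F_2[t]$ precisely when $g(1)=0$, i.e.\ when $g$ has an even number of terms.

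When $j$ is even, $P$ has an even number of terms, so $s \mid P$ and hence $tP^2 \in sR$; then $1 + tP^2$ is a unit by the first paragraph, and the key identity forces $s^2 = 0$, giving $t^2=1$. The case $j$ odd is the main obstacle, because there $1 + tP^2$ is \emph{not} a unit and the identity $s^2(1+tP^2)=0$ does not collapse on its own. The resolution is to extract one more factor of $s$: since $t = 1+s$, one has $tP^2 = P^2 + sP^2$, whence
\[ 1 + tP^2 = (1+P)^2 + s P^2. \]
Here $P$ has an odd number of terms, so $1+P = sw$ for some $w$, which makes $P = 1 + sw$ a unit and puts $(1+P)^2 = s^2w^2$ into $s^2R$. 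Consequently $1 + tP^2 = s\,(sw^2 + P^2)$, and the second factor is a unit (it is $P^2$ plus a nilpotent). The key identity then reads $s^3\cdot(\text{unit}) = 0$, so $s^3 = 0$ and a fortiori $s^4 = 0$, giving $t^4 = 1$. The one point demanding care throughout is to keep all unit/nilpotency bookkeeping valid in $R$ itself rather than in $\F_2[t]$, which the uniform observation that $1 + sy$ is always a unit handles cleanly.
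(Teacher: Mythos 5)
Your proof is correct, but it takes a genuinely different route from the paper's. The paper argues by induction on $N$: it raises the relation $1+t+t^2+t^k=0$ to the power $2^{N-2}$ (or $2^{N-3}$ in the $k\equiv 3$ case), uses $t^{2^N}=1$ together with the congruence class of $k$ modulo $4$ (respectively $8$) to collapse the result to $1+t^{2^{N-1}}=0$, and then invokes the inductive hypothesis; the $k\equiv 3$ base case needs the extra multiplication by $1+t$. You instead dispense with induction entirely by changing variables to $s=1+t$, noting $s^{2^N}=(1+t)^{2^N}=1+t^{2^N}=0$, and rewriting the second relation as the single identity $s^2(1+tP^2)=0$ with $P=\sum_{i=0}^{j-1}t^i$, $k=2j+1$; the factor theorem over $\F_2$ (a polynomial vanishes at $1$ iff it has evenly many terms) then decides whether $1+tP^2$ is a unit ($j$ even, forcing $s^2=0$) or lies in $sR$ with unit cofactor ($j$ odd, forcing $s^3=0$). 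I checked each step: the telescoping factorization $1+t^j=sP$, the key identity, the unit/nilpotent bookkeeping, and the translation $k\equiv 1\ (\mathrm{mod}\ 4)\Leftrightarrow j$ even are all sound, and the degenerate case $k=1$ (empty sum $P=0$) goes through. Your argument is shorter, avoids the mod-$8$ case split, and in fact proves something slightly sharper in the $k\equiv 3$ case, namely $(1+t)^3=0$ rather than merely $t^4=1$; it also never uses the hypothesis $N\ges 2$ beyond the nilpotency of $s$. What the paper's induction buys in exchange is that it manipulates only the given relations by Frobenius powers, which matches the style of computation used later in Theorem \ref{Higher exp thm}, but your version could be substituted without loss.
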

\begin{proof}
In what follows, all equations take place in $R$. Assume first that $k \equiv 1 \mod 4$. We will use induction on $\PL$. If $\PL=2$, then $t = t^k$ and hence $1+t^2 = 0$ in $R$. So, assume that $\PL \ges 3$ and that the lemma holds for $\PL-1$, i.e.\ that $t^{2^{\PL-1}}=1$ implies $t^2=1$. Since $k \equiv 1 \mod 4$, we have $(t^k)^{2^{\PL-2}} = t^{2^{\PL-2}}$. Then, $(1+t+t^2+t^k)^{2^{\PL-2}} = 1+t^{2^{\PL-1}}$, which means that $t^{2^{\PL-1}} = 1$ and thus $t^2=1$.

Now, assume that $k \equiv 3 \mod 4$. We again use induction on $\PL$. When $\PL=2$, we get $t^k=t^3$ and $1+t+t^2+t^3=0$. Multiplying the latter equation by $1+t$ results in $1+t^4=0$, and the lemma holds. So, assume that $\PL \ges 3$ and that $t^{2^{\PL-1}}=1$ implies $t^4=1$. For readability, let $y=t^{2^{\PL-3}}$. Since $k \equiv 3 \mod 4$, $(t^k)^{2^{\PL-3}}$ will equal either $y^3$ or $y^{-1}$ depending on whether $k$ is equivalent to 3 or 7 mod 8. Keeping this in mind, we see that $(1+t+t^2+t^k)^{2^{\PL-3}}$ equals either $1+y+y^2+y^3$ or $1+y+y^2+y^{-1}$. If $1+y+y^2+y^3=0$, then multiplication with $1+y$ results in $1+y^4=1+t^{2^{\PL-1}}=0$. On the other hand, if $1+y+y^2+y^{-1}=0$, then we obtain the same relation via multiplication with $y+y^2$. In either case, $t^{2^{\PL-1}}=1$, and hence $t^4=1$.
\end{proof}

\begin{Thm}\label{Higher exp thm}
Let $G$ be a finite $2$-group. For each $a \in G$, let $N_a \ges 0$ be such that the exponent of $C_G(a)/\langle a \rangle$ is $2^{N_a}$. That is, $N_a$ is the smallest non-negative integer such that $b^{2^{N_a}} \in \langle a \rangle$ for all $b \in C_G(a)$.

Assume that there exists $a \in G$ such that one of the following conditions holds:
\begin{enumerate}
\item[(i)] $N_a=0$ and $|a| \ges 8$.
\item[(ii)] $N_a=1$ and $|a| \ges 16$.
\item[(iii)] $N_a \ges 2$ and $|a| \ges 2^{2N_a+1}$.
\end{enumerate}
Then, $G$ is not realizable in characteristic 2.
\end{Thm}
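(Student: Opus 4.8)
The plan is to assume, for contradiction, that $G$ is realizable in characteristic $2$ and to locate a power of $a$ whose multiplicative order is forced down to at most $4$ by Lemma \ref{Polynomial lemma}, contradicting the size hypotheses. First I would invoke Theorem \ref{R=2G thm} to replace $G = R^\times$ by a local residue ring $R$ of $\Z_2[G]$ with maximal ideal $M$ satisfying $G = 1 + M$; since $\rchar(R) = 2$, the prime field $\F_2$ sits inside $R$. Writing $|a| = 2^s$ and $u = 1 + a$, the containment $a \in 1 + M$ makes $u$ nilpotent, so the commutative subring $\F_2[a] = \F_2[u]$ is a local $\F_2$-algebra, and I would run the whole argument there, freely using the Frobenius identity $(x + y + z)^{2^j} = x^{2^j} + y^{2^j} + z^{2^j}$.

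The engine is the element $w := 1 + a + a^2 = 1 + u + u^2$. Being $1$ plus a nilpotent, $w$ is a unit of $\F_2[a]$, hence of $R$; being a polynomial in $a$, it commutes with $a$, so $w \in C_G(a)$. By the defining property of $N_a$ I then get $w^{2^{N_a}} \in \langle a \rangle$, and the Frobenius identity evaluates this as
\[ w^{2^{N_a}} = 1 + a^{2^{N_a}} + a^{2^{N_a+1}} = 1 + c + c^2, \qquad c := a^{2^{N_a}}. \]
Thus $1 + c + c^2 \in \langle a \rangle$, say $1 + c + c^2 = a^k$.

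The crux---and the step I expect to be most delicate---is extracting from $a^k = 1+c+c^2$ a clean relation of the exact shape needed by Lemma \ref{Polynomial lemma}, namely $1 + c + c^2 + c^{k'} = 0$ with $k'$ odd. In the variable $u$ one has $c = 1 + u^{2^{N_a}}$ and $c^2 = 1 + u^{2^{N_a+1}}$, so the identity reads $(1+u)^k = 1 + u^{2^{N_a}} + u^{2^{N_a+1}}$. I would first note that in every one of the three cases the hypotheses give $s - N_a \ges 3$ (for (iii), $s - N_a \ges N_a + 1 \ges 3$); since $(1+u)^{2^j} = 1 + u^{2^j}$, the order $2^s$ of $a$ is the least power of $2$ at least equal to the nilpotency index $d$ of $u$, whence $d > 2^{s-1} \ges 2^{N_a+2}$ and the monomials $u^{2^{N_a}}, u^{2^{N_a+1}}$ are genuinely nonzero (no truncation modulo $u^d$). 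Comparing coefficients with Lucas' theorem, the vanishing of $\binom{k}{i}$ for $1 \les i < 2^{N_a}$ together with $\binom{k}{2^{N_a}} \equiv 1 \pmod 2$ forces $2^{N_a} \mid k$; writing $k = 2^{N_a} k'$ turns $a^k$ into $c^{k'}$ and makes $k'$ odd. This yields $1 + c + c^2 + c^{k'} = 0$ in $\F_2[c]$ with $k'$ odd, alongside $c^{2^{s - N_a}} = 1$.

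To finish, set $N := s - N_a \ges 3$. The two relations give a ring surjection $\Z_2[t]/(1 + t^{2^N},\, 1 + t + t^2 + t^{k'}) \to \F_2[c]$ sending $t \mapsto c$, so Lemma \ref{Polynomial lemma} (applicable since $N \ges 2$ and $k'$ is odd) forces $c^4 = 1$ in $\F_2[c]$, hence in $R$. But $c = a^{2^{N_a}}$ has order $2^{s - N_a} = 2^N \ges 8$ in $G = R^\times$, contradicting $c^4 = 1$. The main obstacle, as indicated, is the binomial-coefficient bookkeeping that simultaneously pins down the divisibility $2^{N_a} \mid k$ and the parity of $k'$; everything else is a short reduction to Lemma \ref{Polynomial lemma}.
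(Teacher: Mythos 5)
Your proposal is correct, and it follows the same overall strategy as the paper: reduce to a local residue ring $R$ of $\Z_2[G]$ with $G = 1+M$, manufacture a relation of the form $1+z+z^2+z^{k'}=0$ with $k'$ odd and $z$ of large $2$-power order, and feed it to Lemma \ref{Polynomial lemma}. The differences lie in how you produce that relation. The paper chooses $y \in \langle a\rangle$ of order exactly $2^{2N_a+1}$, sets $x = y^{2^{N_a}}$, writes $y^2+y = 1+h$ with $h \in C_G(a)$, and obtains $g = h^{2^{N_a}} = 1+x+x^2 \in \langle a\rangle$; it then pins down $g = x^k$ with $k$ odd by the order computation $g^{2^{N_a}} = x^{2^{N_a}}$, which forces $\langle g\rangle = \langle x\rangle$ inside the cyclic group $\langle a\rangle$. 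You instead work with $w = 1+a+a^2$ directly, apply the definition of $N_a$ to get $w^{2^{N_a}} = 1+c+c^2 = a^k$ with $c = a^{2^{N_a}}$, and extract $k = 2^{N_a}k'$ with $k'$ odd via Lucas' theorem applied to $(1+u)^k$ for the nilpotent $u = 1+a$ (your identification $\F_2[u] \cong \F_2[t]/(t^d)$ and the bound $d > 2^{N_a+2}$ coming from $s - N_a \ges 3$ are exactly what is needed to make the coefficient comparison legitimate). Your route buys a uniform treatment of all three cases (i)--(iii), where the paper proves (iii) and remarks that (i) and (ii) need minor modifications; the paper's order argument buys a shorter verification that avoids binomial-coefficient bookkeeping. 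Both correctly note that one only needs a surjection onto $\F_2[c]$ (resp.\ the subring generated by $x$) rather than an isomorphism for the lemma to apply.
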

\begin{proof}
We will prove (iii). The arguments for (i) and (ii) require only minor modifications, which we note at the end of the proof. 

We proceed by contradiction. If $G$ is realizable in characteristic $2$, then there is a residue ring $R$ of $\Z_{2}[G]$ such that $G = R^\times$. By Theorem \ref{R=2G thm}, $R$ is a local ring with maximal ideal $M$ and $M = 1 + G$. 

Fix $a \in G$ such that (iii) holds, and let $N=N_a$. Let $y \in \langle a \rangle$ such that $|y| = 2^{2N+1}$, and let $x=y^{2^N}$. Then, $|x| = 2^{N+1}$.

Consider $x^2+x$ and $y^2+y$. Both elements are in $M$, so there exist $g, h \in G$ such that $x^2+x=1+g$ and $y^2+y=1+h$. Notice that $g=x^2+x+1$, which commutes with $a$, so in fact $g \in C_G(a)$, and similarly for $h$. Next, we have
\begin{equation*}
1+h^{2^N} = (1+h)^{2^N} = (y^2+y)^{2^N} = (y^{2^N})^2+y^{2^N}=x^2+x=1+g.
\end{equation*}
Hence, $g=h^{2^N}$, which means that $g \in \langle a \rangle$. Moreover,
\begin{equation*}
g^{2^N} = (x^2+x+1)^{2^N} = x^{2^N},
\end{equation*}
so $g$ is an element of $\langle a \rangle$ such that $|g|=|x|=2^{N+1}$. Thus, $\langle g \rangle =  \langle x \rangle$ and $g = x^k$ for some odd integer $k$. Hence, we obtain $x^k=x^2+x+1$, or equivalently $1+x+x^2+x^k=0$. It follows that the subring of $R$ generated by $x$ is isomorphic to the ring $\Z_2[t]/I$ of Lemma \ref{Polynomial lemma} via the mapping $x \mapsto t$. By that lemma, $x^4=1$, which contradicts the fact that $|x|=2^{N+1}\ges 8$.

This proves (iii). For (i) or (ii), take $y \in \langle a \rangle$ such that $|y|=8$, and let $x=y^2$. Define $g$ and $h$ as before. Then, one may show that $g=x^3$ and $h \in \{y^3, y^7\}$. Lemma \ref{Polynomial lemma} may then be applied to $y$ to conclude that $|y|=4$, a contradiction.
\end{proof}

The lower bounds for $|a|$ in conditions (i) and (ii) in Theorem \ref{Higher exp thm} are the best possible. This is because the groups $C_8 \times C_2$ (with an element $a$ such that $|a|=8$ and $N_a=1$) and $C_{16} \times C_4 \times C_2 \times C_2$ (with an $a$ such that $|a|=16$ and $N_a=2$) are both realizable in characteristic 2 (see Example \ref{Direct factor not realizable}). Moreover, the conclusion of Theorem \ref{Higher exp thm} does not always hold in characteristic $2^m$ with $m \ges 2$. For instance, the group $C_{16} \times C_2$ satisfies (ii), and hence is not realizable in characteristic 2; however, $C_{16} \times C_2 \cong \Z_{64}^\times$, and so is realizable in characteristic 64. Fortunately, we are able to prove that condition (i) implies a group is not realizable in characteristic $2^m$, thus recovering a recent result of Chebolu and Lockridge \cite[Prop.\ 7.1]{ChebLockPGroup}.

\begin{customthm}{\ref{Exp at least 8 cor 1}}
Let $G$ be a finite $2$-group. Assume that there exists $a \in G$ such that $|a| \ges 8$ and $C_G(a) = \langle a \rangle$. Then, $G$ is not realizable in characteristic $2^m$ for any $m \ges 1$.
\end{customthm}
\begin{proof}
As in Theorem \ref{Higher exp thm}, if $G$ is realizable in characteristic $2^m$, there is a residue ring $R$ of $\Z_{2^m}[G]$ such that $R^\times = G=1+M$, where $M$ is the unique maximal ideal of $R$. Note that $\Z_{2^m}^\times$ is a central subgroup of $R^\times$, and so $\Z_{2^m}^\times \les C_G(a)$. By assumption, $C_G(a)$ is cyclic, so $\Z_{2^m}^\times$ is also cyclic. Hence, $m \les 2$, and $\rchar(R)$ is either 2 or 4. The case $\rchar(R)=2$ is ruled out by Theorem \ref{Higher exp thm}, so we will assume that $\rchar(R)=4$.

Let $x \in \langle a \rangle$ be such that $|x|=8$. Then, $x^4$ is the unique element of order 2 in $\langle a \rangle$. Observe that  $\Z_4^\times = \{1,-1\}$ is a subgroup of $C_G(a) = \langle a \rangle$. By the uniqueness of $x^4$, we must have $x^4 = -1$.

Now, for any $k$, we have $(2a^k-1)^2 = 1$. This means that $2a^k-1$ is a unit of $R$ that commutes with $a$. So, $2a^k - 1 \in \langle a \rangle$ and has order 1 or 2. If $|2a^k - 1| = 2$, then $2a^k - 1 = -1$, which implies that $2a^k = 0$; this contradicts the fact that $\rchar(R)=4$. Thus, $2a^k - 1 = 1$, and so
\begin{equation}\label{2a^k=2}
2a^k = 2 \text{ for all } k \ges 0.
\end{equation}

As in the characteristic 2 case, let $g \in G$ be such that $x^2+x=1+g$. Then, $g$ commutes with $a$, so $g \in \langle a \rangle$. Keeping in mind \eqref{2a^k=2}, on the one hand we have
\begin{equation*}
(x^2+x)^4 = x^8 + 2x^6 + x^4 = 1 + 2 - 1 = 2,
\end{equation*}
while on the other hand
\begin{equation*}
(1+g)^4 = 1 + 2g^2 + g^4 = -1 + g^4.
\end{equation*}
Thus, $-1+g^4 = 2$, which means that $g^4 = -1 = x^4$. Since $g \in \langle a \rangle$, we must have
\begin{equation*}
g \in \{x, x^3, x^5, x^7\} = \{x, x^3, -x, -x^3\}.
\end{equation*}

Suppose that $g = \pm x$. Then, $x^2+x = 1 \pm x$. Subtracting $x$ from both sides of this equation gives either $x^2=1$ or $x^2 = 1-2x=-1$. Both equations contradict the fact that $|x|=8$.

Next, suppose that $g= x^3$, so that $x^2+x=1 + x^3$. Multiplying both sides of the equation by $1 + x$ and simplifying produces $2=1+x^4$, which means that $x^4=1$. A similar contradiction is reached when $x^2+x=1-x^3$ after multiplication by $1+x$.

We reach a contradiction in all cases, so we conclude that $G$ is not realizable in characteristic 4.
\end{proof}

As a corollary, we recover a recent result of Chebolu and Lockridge \cite[Prop.\ 7.1]{ChebLockPGroup}.

\begin{Cor}\label{Exp at least 8 cor 2}
Let $G$ be a nonabelian group of order $2^n$, where $n \ges 4$. If $G$ has exponent $2^{n-1}$, then $G$ is not realizable as the group of units of a finite ring.
\end{Cor}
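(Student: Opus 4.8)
The plan is to reduce this corollary to Theorem \ref{Exp at least 8 cor 1} together with Proposition \ref{Ring char prop}, using the exponent hypothesis to supply both the self-centralizing element and the indecomposability. The starting point is that a $2$-group $G$ of order $2^n$ and exponent $2^{n-1}$ contains an element $a$ of order exactly $2^{n-1}$ (in a $p$-group the exponent is the maximal element order), and since $n \ges 4$ we have $|a| = 2^{n-1} \ges 8$. Because $|\langle a \rangle| = 2^{n-1}$, the cyclic subgroup $\langle a \rangle$ has index $2$ in $G$; in particular it is normal and maximal.

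The first substantive step is to verify the hypothesis $C_G(a) = \langle a \rangle$ of Theorem \ref{Exp at least 8 cor 1}. Since $a$ commutes with itself, $\langle a \rangle \les C_G(a)$, and as $\langle a \rangle$ has index $2$ the only subgroups of $G$ containing it are $\langle a \rangle$ and $G$, so $C_G(a)$ is one of these. If $C_G(a) = G$, then $a \in Z(G)$, whence $G/Z(G)$ is a quotient of $G/\langle a \rangle \cong C_2$ and therefore cyclic; this forces $G$ to be abelian, contradicting the hypothesis. Hence $C_G(a) = \langle a \rangle$, and Theorem \ref{Exp at least 8 cor 1} shows that $G$ is not realizable in characteristic $2^m$ for any $m \ges 1$.

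It remains to upgrade ``not realizable in any characteristic $2^m$'' to ``not realizable as the group of units of any finite ring,'' and for this I would invoke Proposition \ref{Ring char prop} with $k = 1$, which requires $G$ to be nonabelian and indecomposable. Nonabelianness is given, so the second substantive step is indecomposability. Suppose $G = A \times B$ with both factors nontrivial, and write $a = (a_1, a_2)$. Since $|a_1|$ and $|a_2|$ are powers of $2$, we have $\max(|a_1|,|a_2|) = |a| = 2^{n-1}$; say $|a_1| = 2^{n-1}$. Then $|A| \ges 2^{n-1}$, and as $|A|\,|B| = 2^n$ with $|B| \ges 2$, we are forced to have $|A| = 2^{n-1}$, $|B| = 2$, and $A = \langle a_1 \rangle$ cyclic. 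But then both $A$ and $B$ are abelian, making $G$ abelian, a contradiction. Hence $G$ is indecomposable. (This also recovers, without needing the classification, the familiar fact that the only nonabelian groups of order $2^n$ with a cyclic subgroup of index $2$ that are relevant here---dihedral, semidihedral, generalized quaternion, and modular---are indecomposable.)

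Combining these ingredients finishes the argument: if $G$ were the group of units of a finite ring $R$, then Proposition \ref{Ring char prop} would give $\rchar(R) = 2^m$ for some $m \ges 1$, so $G$ would be realizable in characteristic $2^m$, contradicting the conclusion drawn from Theorem \ref{Exp at least 8 cor 1}. Thus no such $R$ exists. I do not expect a serious obstacle: each step is short, and the only mild subtlety is recognizing that Theorem \ref{Exp at least 8 cor 1} by itself only excludes characteristics of the form $2^m$, so Proposition \ref{Ring char prop} is genuinely required to rule out every other characteristic---and that is precisely where nonabelianness and indecomposability enter.
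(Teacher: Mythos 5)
Your proof is correct and follows essentially the same route as the paper: produce an element $a$ of order $2^{n-1}\ges 8$, use maximality of $\langle a\rangle$ plus the ``$G/Z(G)$ cyclic implies $G$ abelian'' argument to get $C_G(a)=\langle a\rangle$, and then combine Theorem \ref{Exp at least 8 cor 1} with Proposition \ref{Ring char prop}. The only difference is that you explicitly verify the indecomposability needed to invoke Proposition \ref{Ring char prop}, a hypothesis the paper's proof leaves implicit; your verification is correct and is a worthwhile detail to record.
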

\begin{proof}
Assume that $G$ has exponent $2^{n-1}$, and let $a \in G$ with $|a| = 2^{n-1}$. Then, $\langle a \rangle$ is a maximal subgroup of $G$ contained in $C_G(a)$. If $C_G(a) = G$, then $a \in Z(G)$ and $[G:Z(G)] \les 2$. This implies that $G$ is abelian, which is a contradiction. So, $C_G(a) = \langle a \rangle$. By Proposition \ref{Ring char prop} and Theorem \ref{Exp at least 8 cor 1}, $G$ is not realizable.
\end{proof}

\begin{Ex}\label{Non realizable examples}
We exhibit some families of 2-groups to which Corollary \ref{Exp at least 8 cor 2} applies. (These examples, and others, can also be found in \cite{ChebLockPGroup}.)
\begin{enumerate}[(1)]
\item By Theorem \ref{Exp at least 8 cor 1}, for every $n \ges 3$ and for all $m \ges 1$, the cyclic group $C_{2^n}$ is not realizable in characteristic $2^m$. Of course, if $2^n+1$ is a prime, then $C_{2^n}$ is realizable in characteristic $2^n+1$, since $\F_{2^n+1}^\times \cong C_{2^n}$.

\item Recall that the generalized quaternion group $Q_{2^n}$ has presentation
\begin{equation*}
Q_{2^n} = \langle a, b : a^{2^{n-1}} = 1, a^{2^{n-2}}=b^2, bab^{-1} = a^{-1} \rangle.
\end{equation*}
The group $Q_{2^n}$ has exponent $2^{n-1}$ and is both nonabelian and indecomposable when $n \ges 3$. By Proposition \ref{Ring char prop} and Corollary \ref{Exp at least 8 cor 2}, $Q_{2^n}$ is not realizable when $n \ges 4$. Note, however, that the ordinary quaternion group $Q_8$ is realizable in characteristic 2, because $Q_8$ has exponent 4.

\item Similar to the last example, the quasidihedral group $QD_{2^n}$ of order $2^n$ has presentation
\begin{equation*}
QD_{2^n} = \langle a, b : a^{2^{n-1}} = b^2 = 1, bab^{-1} = a^{2^{n-2}-1} \rangle.
\end{equation*}
When $n \ges 4$, this group is also nonabelian, indecomposable, and has exponent $2^{n-1}$. Hence, it too is not realizable in these cases.
\end{enumerate}
\end{Ex}

\section{Intriguing examples and open questions}\label{Example section}

As we have seen, the realizability of 2-groups is not a simple matter. Factors that can affect the realization of a group $G$ as the unit group of the finite ring $R$ include the exponent of $G$, the nilpotency class of $G$, the characteristic of $R$, and whether or not $G$ has a normal complement in $(\Z_{m}[G])^\times$. In this final section, we have collected a number of examples and open questions related to these variables.  We begin with some examples that we find interesting, and we end with some questions we would like to see answered.

\begin{Ex}\label{Norm comp, but not realizable}
\textit{There exists a 2-group $G$ that has a normal complement in $(\Z_2[G])^\times$, but is not realizable in characteristic 2.} 
\end{Ex}

\begin{proof} The group $C_8$ is not realizable in characteristic 2 by Theorem \ref{Exp at least 8 cor 1}. However, calculations performed with GAP \cite{GAP} show that $(\Z_2[C_8])^\times \cong C_8 \times C_4 \times C_2 \times C_2$, and that the image of $C_8$ under the natural embedding $C_8 \to \Z_2[C_8]$ has multiple normal complements in $(\Z_2[C_8])^\times$.

For a nonabelian example, we can take $M_{16}$, the Modular or Isanowa group of order $16$, which is group SmallGroup(16,6) in GAP.  This group has presentation
\[ M_{16} = \left\langle x_1, x_2 : x_1^8 = x_2^2 = [x_2,x_1]^2 = x_1^4[x_2,x_1]= 1\right\rangle,\] 
order $16$, exponent $8$, and nilpotency class $2$. First, this group has a normal complement in $(\Z_2[M_{16}])^\times$ by \cite[Theorem 2]{Ivory}.  On the other hand, since $M_{16}$ is nonabelian and indecomposable, Proposition \ref{Ring char prop} tells us that $M_{16}$ can only be realized by $R$ if $\rchar(R)=2^m$ for some $m \ges 1$. However, $M_{16}$ has a self-centralizing subgroup $\langle x_1 \rangle$ of order $8$, so, by Theorem \ref{Exp at least 8 cor 1}, $M_{16}$ is not realizable in characteristic $2^m$ for any $m$; hence, $M_{16}$ is not realizable.
\end{proof}

\begin{Ex}
\textit{There exists a 2-group of exponent 8 and nilpotency class 2 that is not realizable.}
\end{Ex}

\begin{proof}
Once again, we can take $M_{16}$, as described in Example \ref{Norm comp, but not realizable}.
\end{proof}

\begin{Ex}\label{Exp 8, nilp 2, realizable}
\textit{There exists a 2-group of exponent 8 and nilpotency class 2 that is realizable.}
\end{Ex}

\begin{proof} Consider SmallGroup(32,37), which has presentation
\[ G = \left\langle x_1, x_2, x_3 : x_1^8 = x_2^2 = x_3^2 = x_1^4[x_2,x_1]= [x_3,x_1] = [x_3, x_2] = 1\right\rangle.\]
Then, $G$ has order $32$, exponent $8$, and nilpotency class $2$.  If we define the ideal $I$ by
\[ I := \langle 1 + x_1 + x_2 + x_1^5 x_2, \; 1 + x_1 + x_1^2 + x_1^7 x_3, \; 1 + x_1 + x_1^4 + x_1^5\rangle,\]
then the ring $\Z_2[G]/I$ has a group of units isomorphic to $G$, and hence at least some groups of exponent $8$ and nilpotency class $2$ are realizable.  What makes this example even more striking is that, if $M_{16}$ is the group SmallGroup(16,6) presented in Example \ref{Norm comp, but not realizable}, then $G \cong M_{16} \times C_2$, showing that, in certain cases, there are 2-groups that are realizable as a group of units even when not all of their direct factors are.
\end{proof}

\begin{Ex}
\textit{There exists a 2-group that is realizable in characteristic $2^m$ for some $m \ges 2$ but is not realizable in characteristic 2.}
\end{Ex}

\begin{proof} The group $C_{16} \times C_2$ is isomorphic to the unit group of $\Z_{64}$, and so is realizable in characteristic 64. However, $C_{16} \times C_2$ satisfies part (ii) of Theorem \ref{Higher exp thm}, and hence is not realizable in characteristic 2. More generally, the same is true for $\Z_{2^m}^\times \cong C_{2^{m-2}} \times C_2$ for all $m \ges 6$.
\end{proof}

\begin{Ex}\label{Exp 8, nonab, indecomp}
\textit{There exists an indecomposable nonabelian 2-group with exponent 8 that is realizable in characteristic 2.}
\end{Ex}
\begin{proof}
We give two examples, both of order 64. Let $G_1$ be SmallGroup(64,88), with presentation
\begin{equation*}
G_1 = \langle x_1, x_2, x_3 : x_1^8=x_2^2=x_3^2=[x_2,x_1]^2=[x_2,x_1^2]=x_1^4[x_3,x_1]=[x_3,x_2]=1\rangle
\end{equation*}
and let $G_2$ be SmallGroup(64,104), with presentation
\begin{equation*}
G_2 = \langle x_1, x_2, x_3 : x_1^8=x_2^4=x_3^2=x_2^2[x_2,x_1]=x_1^4[x_3,x_1]=[x_3,x_2]=1\rangle.
\end{equation*}
Then, $G_1$ and $G_2$ are realized as $(\Z_2[G_1]/I_1)^\times$ and $(\Z_2[G_2]/I_2)^\times$, respectively, where $I_1$ and $I_2$ are the ideals below:
\begin{align*}
I_1 &= \langle 1+x_1+x_1^2+x_1^3[x_2,x_1], \; 1+x_1+x_1x_2+x_2x_3, \; 1+x_1+x_1x_3+x_1^3x_3 \rangle,\\
I_2 &= \langle 1+x_1+x_1^2+x_1^3x_2^2, \; 1+x_1+x_1x_2+x_2x_3, \; 1+x_1+x_1x_3+x_1^4x_3 \rangle.
\end{align*}
\end{proof}

\begin{Ex}
\textit{There exists an indecomposable 2-group that is realizable in characteristic $2^m$ for some $m \ges 2$.}
\end{Ex}

\begin{proof} Following \cite[Part (F)]{Gilmer}, the ring 
\begin{equation*}
R = \Z[X]/\langle 4, 2X, X^2-2 \rangle \cong \Z_{4}[X]/\langle 2X, X^2+2 \rangle
\end{equation*}
has characteristic 4 and unit group isomorphic to $C_4$. Note that if $C_4 = \langle a \rangle$, then $R$ is also isomorphic to $\Z_4[C_4]/\langle 2a+a, a^2+1 \rangle$ via the mapping $X \mapsto 1+a$.

As for noncommutative examples, the dihedral group $D_8$ is realizable in characteristic 4 by \cite[Thm.\ 1.1]{ChebLockDihedral}. Also, the quaternion group 
\begin{equation*}
Q_8 = \langle \bfi, \bfj : \bfi^4=\bfj^4=1, \bfi^2=\bfj^2, \bfj\bfi\bfj^3 = \bfi^3 \rangle
\end{equation*}
is realized in characteristic 4 via $\Z_4[Q_8]/I$, where $I$ is the two-sided ideal
\begin{equation*}
I = \langle 2\bfi+2, \; 2\bfj+2, \; 1+\bfi+\bfi^2+\bfi^3, \; 1+\bfi+\bfj+\bfi\bfj \rangle.
\end{equation*}
\end{proof}

\begin{Ex}\label{Direct factor not realizable}
\textit{There exists a decomposable 2-group that is realizable with a direct factor that is not realizable.}
\end{Ex}

\begin{proof} The unit groups $\Z_{2^m}^\times \cong C_{2^{m-2}} \times C_2$ with $m \ges 6$ are examples, since $C_{2^{m-2}}$ need not be realizable. Other examples are also possible. In characteristic 2, neither $C_8$ nor $C_{16}$ is realizable. However, if $C_8 = \langle a \rangle$, then $C_8 \times C_2$ occurs as the unit group of $\Z_2[C_8]/\langle 1+a+a^4+a^5 \rangle$. Also, if $C_{16} = \langle a \rangle$, then $C_{16} \times C_4 \times C_2 \times C_2$ is the unit group of $\Z_2[C_{16}]/I$, where $I$ is the two-sided ideal
\begin{equation*}
I = \langle 1+a+\cdots+a^{15}, \; 1+a+a^8+a^9, \; 1+a^2+a^8+a^{10} \rangle.
\end{equation*}
More generally, \cite[Prop.\ 4.8]{DelCorsoDvornFiniteGroups} shows that if $G$ is a finite abelian 2-group of exponent $2^k$, where $k \ges 2$, then for all $m \ges k-2$, the group $G \times C_{2^m} \times C_2$ is realizable in characteristic $2^{m+1}$.

Lastly, for a nonabelian example, we return again to $M_{16}$. The group $M_{16}$ is not realizable, but $M_{16} \times C_2$ (SmallGroup(32,37) from Example \ref{Exp 8, nilp 2, realizable}) is realizable in characteristic 2.
\end{proof} 

Finally, we end with some open questions.



\begin{Ques}
 Is there a $2$-group with exponent $4$ and nilpotency class $3$ that is not realizable in characteristic $2$?  Is there a $2$-group realizable in characteristic $2$ with nilpotency class at least $4$?  
\end{Ques}

Thus far, we have seen that many groups with exponent $4$ and nilpotency class $3$ are realizable in characteristic $2$ (Theorem \ref{thm:exp4nil2}), whereas we saw in Section \ref{sect:counterexample} that SmallGroup(64,34) -- which has exponent $4$ and nilpotency class $4$ -- is not realizable in characteristic $2$.  It would be extremely interesting to know precisely where the dividing line between groups of exponent $4$ that are realizable and groups of exponent $4$ that are not realizable lies.  Moreover, as we saw in the proof of Proposition \ref{prop:main} (see in particular the proof of Lemma \ref{lem:rewriting}), having an elementary abelian commutator subgroup is crucial for the application of Moran and Tench's realizability criterion (Lemma \ref{lem:MoranTench*}).  Since $2$-groups with nilpotency class $4$ and larger no longer necessarily have elementary abelian commutator subgroups, it would be quite interesting if such a group were in fact realizable in characteristic $2$.

\begin{Ques}
For a given integer $m \ges 4$, is there a nonabelian, indecomposable $2$-group of exponent $2^m$ that is realizable in characteristic $2$?  If such a $2$-group exists for all $m$, then what is the behavior of $f(m)$, where $2^{f(m)}$ is the smallest order of such a $2$-group?  
\end{Ques}

We have seen that all groups of exponent $4$ are realizable, and the smallest nonabelian group of exponent $2^2 = 4$ has order $2^3 = 8$, so $f(2) = 3$.  Moreover, direct calculation using GAP shows that there are no nonabelian, indecomposable groups of exponent $8$ and order $16$ or $32$ that are realizable in characteristic $2$, whereas Example \ref{Exp 8, nonab, indecomp} shows that there do exist nonabelian, indecomposable groups of exponent $8$ and order $64$ that are realizable in characteristic $2$, so $f(3) = 6$.  Furthermore, Example \ref{Exp 8, nonab, indecomp} provides some evidence that perhaps there exist nonabelian, indecomposable groups with larger exponents that are realizable in characteristic $2$ at large enough orders, although it is still an open question as to whether there exists a nonabelian, indecomposable group of exponent $16$ or greater that is realizable in characteristic $2$.

\begin{Ques}\label{Realizable, but not in char 2}
Does there exist an indecomposable 2-group that is realizable in characteristic $2^m$ for some $m \ges 2$, but is not realizable in characteristic 2?
\end{Ques}

Let $G$ be such an indecomposable 2-group. If $G$ is abelian, then $G$ is cyclic, and hence to be realizable in characteristic $2^m$, $G$ must be either $C_2$ or $C_4$, both of which are realizable in characteristic 2. Moreover, if $G$ has exponent 4, then $G$ is realizable in characteristic 2 by Theorem \ref{thm:exp4nil2}. Thus, if such a $G$ exists, it must be nonabelian and have exponent at least 8.

\begin{Ques}
Let $G$ be a nonabelian 2-group that is not realizable. Does there exist a 2-group $H$ such that $G \times H$ is realizable?
\end{Ques}

This question is inspired by the situation with $C_{2^n}$ (for $n \ges 3$) and $M_{16}$. These groups are not realizable in characteristic $2^m$, but they  become realizable after attaching a direct factor of $C_2$. If $G$ is abelian, then such an $H$ always exists \cite[Prop.\ 4.8]{DelCorsoDvornFiniteGroups}, but the question is open in the case where $G$ is nonabelian.

\subsection*{Acknowledgements}  The authors would like to thank Steve Glasby for pointing out references \cite{Higman} and \cite{Sims} and Leo Margolis for bringing to our attention the example in Section \ref{sect:counterexample} as well as references \cite{PassiSehgal} and \cite{Sandling}.


\end{document}